\documentclass{amsart}
\usepackage{etex}

\usepackage{amsfonts,amsmath,amssymb,amsthm,amscd,amsxtra}
\usepackage{enumerate,epsfig,verbatim}

\usepackage{mathptmx}

\usepackage{amsfonts,amsmath,amssymb,amsthm,amscd,amsxtra}
\usepackage{enumerate,verbatim}
\usepackage{centernot}
\usepackage{todonotes}

\usepackage[T1]{fontenc}
\usepackage[usenames,dvipsnames]{pstricks}
\usepackage[mathscr]{eucal}
\usepackage[notcite,notref]{}
\usepackage{tikz}
\usetikzlibrary{matrix,quotes}
\usepackage[notcite, notref]{}
\usepackage[usenames,dvipsnames]{pstricks}
\usepackage[mathscr]{eucal}
\usepackage{amsfonts,amsmath,amssymb,amsthm,amscd,amsxtra}
\usepackage{enumerate,verbatim}
\usepackage[all,2cell,ps]{xy}
\usepackage{mathptmx}
\usepackage[notcite, notref]{}
\usepackage[pagebackref]{hyperref}
\usepackage{mathtools}
\usepackage{nicefrac}
\usepackage{array}
\usepackage{xcolor}
\usepackage[all,2cell,ps]{xy}
\usepackage{amsfonts}
\usepackage[margin=1.4in]{geometry}
\usepackage{color}
\usepackage[notcite,notref]{}
\usepackage{url}
\usepackage{datetime}
\usepackage{mathtools}
\usepackage[all,2cell,ps]{xy}
\usepackage{amssymb}
\usepackage{amsmath}
\usepackage{amsfonts}
\usepackage{amsmath}
\usepackage{amsthm}
\usepackage{amssymb}
\usepackage{amscd}
\usepackage{amsfonts}
\usepackage{amsxtra}     
\usepackage{epsfig}
\usepackage{verbatim}
\usepackage[all]{xypic}

\SelectTips{cm}{}

\newcommand{\comm}[1]{{\color[rgb]{0.0, 0.5, 0.0} #1}}

\newcommand{\new}[1]{{\color{blue} #1}}

\def\latex/{{\protect\LaTeX}}
\def\latexe/{{\protect\LaTeXe}}
\def\amslatex/{{\protect\AmS-\protect\LaTeX}}
\def\tex/{{\protect\TeX}}
\def\amstex/{{\protect\AmS-\protect\TeX}}
\def\bibtex/{{Bib\protect\TeX}}
\def\makeindx/{\textit{MakeIndex}}

\usepackage{amsmath}
\usepackage{amsthm}
\usepackage{amssymb}
\usepackage{amscd}
\usepackage{amsxtra}     
\usepackage{epsfig}
\usepackage{verbatim}
\usepackage[all]{xypic}
\usepackage{enumerate}


\theoremstyle{plain} 

\newtheorem{thm}{Theorem}[section]

\newtheorem{prop}[thm]{Proposition}
\newtheorem{cor}[thm]{Corollary}

\theoremstyle{definition}

\newtheorem{chunk}[thm]{\hspace*{-1.065ex}\bf}

\newtheorem{lem}[thm]{Lemma}
\newtheorem{dfn}[thm]{Definition}

\newtheorem{eg}[thm]{Example}
\newtheorem{conj}[thm]{Conjecture}
\newtheorem{ques}[thm]{Question}

\newtheorem{rmk}[thm]{Remark}

\theoremstyle{remark}

\newtheorem*{claim*}{Claim}

\newcommand{\bP}{\mathbb{P}}

\newcommand{\cF}{\mathcal{F}}

\newcommand{\cO}{\mathcal{O}}

\newcommand{\cX}{\mathcal{X}}

\newcommand{\fa}{\mathfrak{a}}
\newcommand{\fn}{\mathfrak{n}}
\newcommand{\fm}{\mathfrak{m}}

\newcommand{\cRHom}{\operatorname{R\mathcal{H}om}}



\newcommand{\CC}{\mathbb{C}}

\newcommand{\ZZ}{\mathbb{Z}}
\newcommand{\NN}{\mathbb{N}}
\newcommand{\QQ}{\mathbb{Q}}
\newcommand{\RR}{\mathbb{R}}

\newcommand{\fp}{\mathfrak{p}}

 \DeclareMathOperator{\Tor}{Tor}
\DeclareMathOperator{\Ext}{Ext}
\DeclareMathOperator{\cExt}{\mathcal{E}xt}
\DeclareMathOperator{\Hom}{Hom}

\DeclareMathOperator{\CI}{\textnormal{CI-dim}}

\DeclareMathOperator{\Tr}{\textnormal{Tr}}
\DeclareMathOperator{\hh}{H}

\DeclareMathOperator{\G}{G}

\DeclareMathOperator{\Gdim}{\textnormal{G-dim}}
\DeclareMathOperator{\len}{\textup{length}}

\DeclareMathOperator{\Supp}{Supp}
\DeclareMathOperator{\Spec}{Spec}

\DeclareMathOperator{\Proj}{Proj}

\DeclareMathOperator{\pd}{pd}
\DeclareMathOperator{\fd}{fd}
\DeclareMathOperator{\height}{height}
\DeclareMathOperator{\cx}{cx}

\DeclareMathOperator{\cod}{codim}
\DeclareMathOperator{\grade}{grade}
\DeclareMathOperator{\edim}{embdim}
\DeclareMathOperator{\depth}{depth}
\DeclareMathOperator{\coker}{coker}

\DeclareMathOperator{\HH}{H}

\DeclareMathOperator{\md}{\operatorname{\mathsf{mod}}}
\DeclareMathOperator{\cohe}{\operatorname{\mathsf{coh}}}
\DeclareMathOperator{\reg}{reg}

\newcommand{\ul}{\underline}

\DeclareMathOperator{\db}{\operatorname{\mathsf{D}^b}}
\DeclareMathOperator{\dpf}{\operatorname{\mathsf{D}^{perf}}}
\DeclareMathOperator{\sg}{\operatorname{\mathsf{D}_{sg}}}

\def\Tr{\mathsf{Tr}\hspace{0.01in}}

\def\urltilda{\kern -.15em\lower .7ex\hbox{\~{}}\kern
	.04em}\def\urldot{\kern -.10em.\kern -.10em}\def\urlhttp{http\kern
	-.10em\lower -.1ex\hbox{:}\kern -.12em\lower 0ex\hbox{/}\kern
	-.18em\lower 0ex\hbox{/}} 

\newcommand{\bb}{\left[ \begin{smallmatrix}}
	\newcommand{\eb}{\end{smallmatrix} \right]}

\begin{document}
	
	\title[Remarks on a conjecture of Huneke and Wiegand and the vanishing of (co)homology]{Remarks on a conjecture of Huneke and Wiegand \\and the vanishing of (co)homology}
	
	
	\author[Olgur Celikbas]{Olgur Celikbas}
	\address{Olgur Celikbas\\
		Department of Mathematics \\
		West Virginia University\\
		Morgantown, WV 26506-6310, U.S.A}
	\email{olgur.celikbas@math.wvu.edu}
	
	\author[Uyen Le]{Uyen Le}
	\address{Uyen Le\\
		Department of Mathematics \\
		West Virginia University\\
		Morgantown, WV 26506-6310, U.S.A}
	\email{hle1@mix.wvu.edu}
	
	\author[Hiroki Matsui]{Hiroki Matsui}
	\address{Hiroki Matsui\\ Department of Mathematical Sciences,
		Faculty of Science and Technology,
		Tokushima University,
		2-1 Minamijosanjima-cho, Tokushima 770-8506, JAPAN}
	\email{hmatsui@tokushima-u.ac.jp}
	
	\author[Arash Sadeghi]{Arash Sadeghi}
	\address{Arash Sadeghi\\ Dokhaniat 49179-66686, Gorgan, IRAN}
	\email{sadeghiarash61@gmail.com}
	
	\subjclass[2010]{Primary 13D07; Secondary 13H10, 13D05, 13C12}
	\keywords{complexity, Tate (co)homology, Tor-rigidity, vanishing of Ext and Tor, tensor products, torsion} 
	\thanks{Celikbas was partly supported by WVU Mathematics Excellence and Research Funds (MERF). Matsui was partly supported by JSPS Grant-in-Aid for JSPS Fellows 19J00158.}
	
	\maketitle{}

	\begin{abstract} In this paper we study a long-standing conjecture of Huneke and Wiegand which is concerned with the torsion submodule of certain tensor products of modules over one-dimensional local domains. We utilize Hochster's theta invariant and show that the conjecture is true for two periodic modules. We also make use of a result of Orlov and formulate a new condition which, if true over hypersurface rings, forces the conjecture of Huneke and Wiegand to be true over complete intersection rings of arbitrary codimension. Along the way we investigate the interaction between the vanishing of Tate (co)homology and torsion in tensor products of modules, and obtain new  results that are of independent interest.
	\end{abstract}

	\section{Introduction} Throughout $R$ denotes a commutative Noetherian local ring with unique maximal ideal $\fm$ and residue field $k$, and all $R$-modules are assumed to be finitely generated.
	
	In commutative algebra there are several questions about tensor products of modules that are notoriously difficult to solve; see, for example, \cite{D2013}. A fine example of such a question is the following long-standing conjecture of Huneke and Wiegand:
	
	\begin{conj} \label{HWC} (Huneke - Wiegand; see \cite[page 473]{HW1}) \label{impossible} Let $R$ be a one-dimensional local ring and let $M$ be a nonfree and torsion-free $R$-module. Assume $M$ has rank (e.g., $R$ is a domain). Then the torsion submodule of $M\otimes_{R}M^{\ast}$ is nonzero, i.e., $M\otimes_{R}M^{\ast}$ has (nonzero) torsion, where $M^{\ast}=\Hom_R(M,R)$.
	\end{conj}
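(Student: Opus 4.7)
The plan is to argue by contradiction: assume $M$ is a nonfree torsion-free module of rank with $M \otimes_R M^{\ast}$ torsion-free, and try to deduce that $M$ must be free. Since $R$ is one-dimensional, the absence of torsion in $M \otimes_R M^{\ast}$ amounts, after passing to a Cohen-Macaulay reduction, to its being maximal Cohen-Macaulay. The first step is therefore a depth-theoretic translation: via an Auslander-type depth formula, vanishing of torsion in $M \otimes_R M^{\ast}$ should force the vanishing of $\Tor_1^R(M, M^{\ast})$, at least on the punctured spectrum, recasting the conjecture as a Tor-rigidity statement for $M$ against its dual.

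Next I would complete and specialize to the case where $R$ is a one-dimensional hypersurface, so that Hochster's theta invariant $\theta^R(M, M^{\ast})$ is defined. It is known that $\theta^R(M, M^{\ast}) = 0$ combined with mild additional hypotheses forces $M$ to be free, so the target becomes to extract the vanishing of $\theta^R(M, M^{\ast})$ from the torsion-freeness of the tensor product. For modules of complexity at most one, i.e.\ modules that are eventually two-periodic, $\theta^R(M, M^{\ast})$ is directly computable in terms of Tate (co)homology, and a balance argument against the Tor-rigidity extracted in the first step should close this special case. This is the route I expect for the two-periodic assertion announced in the abstract.

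To move from hypersurfaces to complete intersections of arbitrary codimension, I would appeal to Orlov's semi-orthogonal decomposition of the singularity category, which relates the singularity category of a complete intersection to those of a family of hypersurfaces. The plan is to isolate a hypersurface-level condition --- some vanishing/torsion statement phrased in the singularity category --- whose truth over hypersurfaces propagates via Orlov's equivalence to the complete intersection setting and then translates back to torsion in $M \otimes_R M^{\ast}$.

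The main obstacle is that the conjecture is open even for one-dimensional hypersurfaces once one leaves the two-periodic range: for modules of higher complexity there is no invariant as effective as $\theta^R$ for detecting torsion in $M \otimes_R M^{\ast}$, and consequently the Orlov reduction can at best be conditional, resting on an unproven hypersurface statement. A secondary technical difficulty is the non-Cohen-Macaulay case, where $M$ torsion-free need not be maximal Cohen-Macaulay; here the reduction to the Cohen-Macaulay setting and compatibility with the rank hypothesis must be tracked carefully. For these reasons, I expect the conclusion to be either restricted to two-periodic modules or to hold conditionally on the new hypersurface condition, in line with what the abstract advertises.
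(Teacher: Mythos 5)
The statement you are asked to prove is Conjecture~\ref{HWC}, which is an \emph{open} conjecture; the paper does not prove it, and neither can you. Your proposal correctly recognizes this and instead sketches the paper's two partial contributions (the two-periodic case and the conditional Orlov reduction), so as a reading of the paper's scope it is in the right neighborhood. However, the technical outline of the two-periodic case is off in two important ways. First, you state that one should ``complete and specialize to the case where $R$ is a one-dimensional hypersurface, so that Hochster's theta invariant is defined.'' This is not correct: the invariant $\theta^R(M,-)$, as set up in Definition~\ref{th1}, is defined whenever $M\cong\Omega_R^2 M$ and $M$ is locally of finite projective dimension off the maximal ideal --- no hypersurface or complete-intersection hypothesis is needed. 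Removing that hypothesis is precisely the content of Theorem~\ref{propint}, so your specialization step would defeat the purpose. Second, your logical flow is backwards: you propose to extract the single vanishing $\theta^R(M,M^\ast)=0$ from torsion-freeness of $M\otimes_R M^\ast$ and then invoke a ``$\theta=0$ implies free'' principle. What the paper actually does is show that $\theta^R(M,-)$ descends to the reduced Grothendieck group $\overline{\G}(R)_{\QQ}$ (Corollary~\ref{corth1}), which vanishes for one-dimensional domains, so $\theta^R(M,N)=0$ for \emph{every} $N$; this gives Tor-rigidity of $M$ (Corollaries~\ref{corth2},~\ref{corth4}), and then Tor-rigidity together with two-periodicity and a transpose argument (Proposition~\ref{lemth1}, via~\ref{Tobs}) forces $M$ to be zero if $M\otimes_R M^\ast$ is torsion-free. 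The Tate-(co)homology machinery you mention is a \emph{separate} proof of the complete-intersection case in Section~2 (Corollary~\ref{tc5}); it is not used in the proof of Theorem~\ref{propint}, so you are conflating two distinct arguments. The Orlov portion of your sketch is broadly consistent with Section~4, though the precise hypersurface condition (\ref{cond}(i), involving $M\otimes_R\Ext_R^{c-1}(M,R)$ being Cohen--Macaulay of codimension $c-1$) is more specific than a generic ``vanishing/torsion statement in the singularity category.''
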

	
	Conjecture \ref{HWC}, over Gorenstein rings, is in fact a special case of a celebrated conjecture of Auslander and Reiten \cite{AuRe} and is wide open in general, even for two generated ideals over complete intersection domains of codimension two; see Remark \ref{rmk}. On the other hand, besides some other special cases, Conjecture \ref{HWC} is known to be true over hypersurface rings; see \cite{HW1} and \cite{HW3} for the details. In fact, since maximal Cohen-Macaulay modules (that have no free direct summand) are two-periodic over hypersurface rings, the hypersurface case of Conjecture \ref{HWC} is subsumed by the following result:
	
	\begin{thm} \label{thmintro0} (\cite[4.17]{Ce}) Let $R$ be a one-dimensional local complete intersection domain and let $M$ be a nonzero $R$-module. Assume $M$ is two-periodic, i.e., $M \cong \Omega_R^2M$. Then $M\otimes_RM^{\ast}$ has torsion.
	\end{thm}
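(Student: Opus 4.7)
Since $M\cong\Omega_R^2 M$ and $M\neq 0$, $M$ cannot be free (otherwise $\Omega_R^2 M=0$), and as a second syzygy over the one-dimensional Cohen--Macaulay domain $R$, $M$ is torsion-free, hence maximal Cohen--Macaulay (MCM) and locally free on the punctured spectrum. Because $R$ is Gorenstein, $M^{\ast}$ shares these properties, so each $\Tor_i^R(M,M^{\ast})$ with $i\geq 1$ has finite length; two-periodicity gives $\Tor_i^R(M,M^{\ast})\cong\Tor_{i+2}^R(M,M^{\ast})$ for $i\geq 1$, so Hochster's theta invariant
\[
\theta^R(M,M^{\ast})\;=\;\ell\bigl(\Tor_2^R(M,M^{\ast})\bigr)-\ell\bigl(\Tor_1^R(M,M^{\ast})\bigr)
\]
is well-defined.

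I argue by contradiction, assuming $M\otimes_R M^{\ast}$ is torsion-free, hence MCM (as $\dim R=1$). The plan is to force $\Tor_i^R(M,M^{\ast})=0$ for every $i\geq 1$ and then to derive a contradiction via the Dual Basis lemma. One direction of the $\Tor$-vanishing is immediate: from $M\cong\Omega_R^2 M=\Omega_R(\Omega_R M)$ we get a short exact sequence $0\to M\to F\to\Omega_R M\to 0$ with $F$ free, and its associated Tor long exact sequence (using $\Tor_1^R(\Omega_R M,M^{\ast})\cong\Tor_2^R(M,M^{\ast})$ via the defining sequence of $\Omega_R M$) yields
\[
0\to\Tor_2^R(M,M^{\ast})\to M\otimes_R M^{\ast}\to F\otimes_R M^{\ast}\to\Omega_R M\otimes_R M^{\ast}\to 0.
\]
Since $\Tor_2^R(M,M^{\ast})$ has finite length and $M\otimes_R M^{\ast}$ is torsion-free, $\Tor_2^R(M,M^{\ast})=0$, whence $\theta^R(M,M^{\ast})=-\ell(\Tor_1^R(M,M^{\ast}))\leq 0$. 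For the reverse inequality $\theta^R(M,M^{\ast})\geq 0$ I would invoke Tate (co)homology over the Gorenstein ring $R$: the dual $M^{\ast}$ is also two-periodic, and Tate/Gorenstein duality should identify $\theta^R(M,M^{\ast})$ with -- or bound it below by -- the length of an intrinsically nonnegative Tate-cohomological quantity such as $\ell(\underline{\Hom}_R(M,M))$. Combining the two inequalities gives $\theta^R(M,M^{\ast})=0$, so $\Tor_1^R(M,M^{\ast})=0$, and two-periodicity propagates this to $\Tor_{\geq 1}^R(M,M^{\ast})=0$.

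The contradiction is obtained from the Auslander four-term exact sequence
\[
0\to\Ext^1_R(\Tr M,M^{\ast})\to M\otimes_R M^{\ast}\to\Hom_R(M^{\ast},M^{\ast})\to\Ext^2_R(\Tr M,M^{\ast})\to 0,
\]
whose outer Ext terms have finite length (since $\Tr M$ is stably free on the punctured spectrum) and which, under full $\Tor$-vanishing combined with Gorenstein duality, collapses to an isomorphism $M\otimes_R M^{\ast}\cong\Hom_R(M^{\ast},M^{\ast})\cong\Hom_R(M,M)$, the second isomorphism coming from reflexivity of the MCM module $M$. The identity endomorphism $\id_M$ therefore lies in the image of the standard evaluation $M\otimes_R M^{\ast}\to\Hom_R(M,M)$, so the Dual Basis lemma forces $M$ to be projective, hence free -- contradicting $M\cong\Omega_R^2 M\neq 0$.

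The main obstacle is the lower bound $\theta^R(M,M^{\ast})\geq 0$. The upper bound $\theta^R(M,M^{\ast})\leq 0$ falls out cleanly from the two-periodicity-induced short exact sequence and the MCM hypothesis on $M\otimes_R M^{\ast}$; the matching lower bound requires a genuinely dual input -- the Tate (co)homology / Gorenstein-duality interaction between $\Tor^R(M,M^{\ast})$ and the stable endomorphism module $\underline{\Hom}_R(M,M)$ -- which is where the complete intersection (Gorenstein) hypothesis enters in an essential way, and which is precisely the tool highlighted in the paper's abstract as developed for this proof.
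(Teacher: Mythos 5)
Your derivation of the upper bound $\theta^R(M,M^{\ast})\le 0$ from $\Tor_2^R(M,M^{\ast})=0$ is correct, but the matching lower bound $\theta^R(M,M^{\ast})\ge 0$, which you correctly identify as ``the main obstacle,'' is never supplied and is not, as far as I can tell, a usable fact. The appeal to an unnamed ``Tate/Gorenstein duality'' identifying or bounding $\theta^R(M,M^{\ast})$ by a length $\ell(\underline{\Hom}_R(M,M))$ is not a theorem one can cite: the known symmetries relate $\theta$ to Herbrand-type Euler characteristics that can take either sign, and nothing in the two-periodic / complete-intersection setting forces $\theta^R(M,M^{\ast})$ to be nonnegative a priori. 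As it stands, the inequality game does not close.

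The missing idea, which the paper uses, is to make $\theta$ vanish \emph{identically} rather than sandwich it. The function $\theta^R(M,-)$ is additive on short exact sequences (Theorem \ref{th2}), hence descends to a linear map $\overline{\G}(R)_{\QQ}\to\QQ$ (Corollary \ref{corth1}); and over a one-dimensional local domain one has $\overline{\G}(R)_{\QQ}=0$ (see \ref{GV}), so $\theta^R(M,M^{\ast})=0$ on the nose, with no geometric or complete-intersection input at all. Combined with your computation $\Tor_2^R(M,M^{\ast})=0$ this yields $\Tor_1^R(M,M^{\ast})=0$ and, by periodicity, $\Tor_i^R(M,M^{\ast})=0$ for all $i\ge1$; the paper packages this as Tor-rigidity (Corollaries~\ref{corth2} and~\ref{corth4}). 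Your closing step via the Auslander four-term sequence and the Dual Basis Lemma is workable but indirect; Proposition~\ref{lemth1} gets there faster by noting $M^{\ast}\cong\Omega_R^2\Tr M$, so that $\Tor$-vanishing gives $\Tor_1^R(M,\Tr M)=0$, which by \ref{Tobs} forces $M$ to be free, a contradiction. Finally, note that in the paper the Tate-(co)homological proof of this theorem is a separate route (Corollaries~\ref{tc3} and~\ref{tc5} via Theorems~\ref{tt1} and~\ref{tp1}) which never mentions $\theta$; trying to extract a sign for $\theta$ out of Tate cohomology mixes two independent strategies.
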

	
	In this paper we study Conjecture \ref{HWC} for the case where $R$ is a domain. Our aim concerning Theorem \ref{thmintro0} is twofold. In section 2 we give a proof of Theorem \ref{thmintro0} -- which is entirely different from the one given in \cite{Ce} -- by using Tate (co)homology; see Corollary \ref{tc5}. This approach motivates us to seek, and hence obtain, new results about the vanishing of Tate (co)homology and torsion in tensor products that are independent of Conjecture \ref{HWC}; see, for example, Corollary \ref{tc1} and Theorem \ref{tp1}. Furthermore, we generalize Theorem \ref{thmintro0} in section 3. More precisely, we remove the complete intersection hypothesis from Theorem \ref{thmintro0} and hence prove:
	
	\begin{thm} \label{propint} Let $R$ be a one-dimensional local domain and let $M$ be a nonzero $R$-module that is two-periodic. Then $M\otimes_RM^{\ast}$ has torsion.
	\end{thm}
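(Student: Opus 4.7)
The plan is to argue by contradiction, using the $2$-periodicity of $M$ to extract even-degree Tor vanishing from a suitable short exact sequence, and then invoking the Tate (co)homology framework of section~2 to close the loop.

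First, since $M\cong\Omega_R^2 M$ and $M\neq 0$, the minimal free resolution of $M$ is $2$-periodic, so $M$ is totally reflexive (G-dimension zero) and, being a second syzygy, maximal Cohen--Macaulay; over the one-dimensional local domain $R$ this just means $M$ is torsion-free with positive rank. The dual $M^{\ast}$ is then nonzero, totally reflexive, and itself $2$-periodic (its minimal resolution is the $R$-dual of that of $M$). As $R_{\fp}$ is a field for every nonmaximal prime $\fp$, each $\Tor_i^R(M,M^{\ast})$ with $i\geq 1$ has finite length, and the $2$-periodicity yields $\Tor_i^R(M,M^{\ast})\cong\Tor_{i+2}^R(M,M^{\ast})$ for $i\geq 1$, so there are effectively only two Tor modules to control.

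Now assume for contradiction that $M\otimes_R M^{\ast}$ is torsion-free. The isomorphism $M\cong\Omega_R(\Omega_R M)$ furnishes a short exact sequence $0\to M\to F\to \Omega_R M\to 0$ with $F$ free; tensoring by $M^{\ast}$ gives
\[
0\to \Tor_1^R(\Omega_R M,M^{\ast})\to M\otimes_R M^{\ast}\to F\otimes_R M^{\ast}\to \Omega_R M\otimes_R M^{\ast}\to 0.
\]
Dimension shifting identifies $\Tor_1^R(\Omega_R M,M^{\ast})\cong\Tor_2^R(M,M^{\ast})$, a finite-length (hence torsion) module embedded into the torsion-free module $M\otimes_R M^{\ast}$; it must therefore vanish. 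By $2$-periodicity, $\Tor_{2k}^R(M,M^{\ast})=0$ for every $k\geq 1$.

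The remaining step---and the main obstacle---is to upgrade this even-degree vanishing to full vanishing of $\Tor_i^R(M,M^{\ast})$ for all $i\geq 1$. I expect to invoke the Tate (co)homology results of section~2, in particular the analogue of Corollary~\ref{tc5}: the $2$-periodicity makes the Tate invariants $\widehat{\Tor}_i^R(M,M^{\ast})$ well-defined and $2$-periodic in $i$, and the Tate-cohomological input coming from the torsion-freeness of $M\otimes_R M^{\ast}$ should propagate vanishing from even to all degrees. A natural mechanism is the Hochster-type theta invariant
\[
\theta(M,M^{\ast}):=\ell\bigl(\Tor_2^R(M,M^{\ast})\bigr)-\ell\bigl(\Tor_1^R(M,M^{\ast})\bigr),
\]
which, together with a symmetry built on $M^{\ast\ast}\cong M$, is forced to vanish; combined with $\Tor_2^R(M,M^{\ast})=0$ this yields $\Tor_1^R(M,M^{\ast})=0$ as well. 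Once all Tors vanish, Tate--Avramov--Buchweitz duality transfers the vanishing to $\widehat{\Ext}_R^i(M,M)=0$ for every $i\in\mathbb{Z}$, which makes $M$ projective---contradicting the fact that a nonzero $2$-periodic module is nonfree. Without the complete intersection hypothesis of Theorem~\ref{thmintro0} one can no longer invoke Hochster's theta invariant in its original hypersurface form, so the even-to-odd upgrade must be carried out intrinsically from the $2$-periodicity of $M$; this is exactly what the Tate (co)homology framework of section~2 is designed to enable.
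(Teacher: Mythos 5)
Your setup is sound through the point where you deduce $\Tor_2^R(M,M^{\ast})=0$: the two-periodicity gives $\Gdim_R M=0$, the domain hypothesis makes every higher Tor a finite-length (hence torsion) module, and the syzygy sequence $0\to M\to F\to\Omega_R M\to 0$ tensored with $M^{\ast}$ embeds $\Tor_2^R(M,M^{\ast})\cong\Tor_1^R(\Omega_R M,M^{\ast})$ into $M\otimes_R M^{\ast}$, forcing it to vanish. The problem is your proposed mechanism for upgrading the even-degree vanishing, which is where the real content of the theorem lives.

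You write that $\theta(M,M^{\ast})$ ``is forced to vanish'' by ``a symmetry built on $M^{\ast\ast}\cong M$.'' No such symmetry is available: the map $\theta^R(M,-)$ does not satisfy any identity of the form $\theta(M,M^{\ast})=-\theta(M^{\ast},M)$ or similar in this generality, and $\theta(M,M^{\ast})=\theta(M^{\ast},M)$ is a tautology that gives no information. The reason $\theta^R(M,M^{\ast})=0$ in the paper is entirely different and relies on two nontrivial facts from Section~3: first, $\theta^R(M,-)$ is additive on short exact sequences (Theorem~\ref{th2}), so it descends to a map $\overline{G}(R)_{\mathbb{Q}}\to\mathbb{Q}$; second, $\overline{G}(R)_{\mathbb{Q}}=0$ when $R$ is a one-dimensional domain (\ref{GV}). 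Together these give $\theta^R(M,N)=0$ for \emph{every} module $N$, and a fortiori for $N=M^{\ast}$. You also guess that the closing step should come from ``the Tate (co)homology framework of section~2,'' but the paper's proof of Theorem~\ref{propint} never touches Tate (co)homology; Section~2 gives an alternative proof of the \emph{weaker} Theorem~\ref{thmintro0}, which still carries the complete intersection hypothesis. The whole point of Section~3 is that the $\theta$-invariant route works without finite CI-dimension.

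The paper's actual proof is an instance of a cleaner intermediate reduction: one first shows (Corollary~\ref{corth4}, via Corollary~\ref{corth2}) that the vanishing of $\theta$ on $\overline{G}(R)_{\mathbb{Q}}$ makes any two-periodic module over a one-dimensional domain \emph{Tor-rigid}; then Proposition~\ref{lemth1} runs the contradiction argument one level of abstraction higher. There the embedding used is $\Tor_1^R(C,M)\hookrightarrow M\otimes_R M^{\ast}$ coming from $0\to M^{\ast}\to F\to C\to 0$, Tor-rigidity kills all higher Tors once $\Tor_1^R(C,M)=0$, and the identification $M^{\ast}\cong\Omega_R^2\Tr M$ together with the Auslander--Goldman obstruction \ref{Tobs} ($\Tor_1^R(M,\Tr M)=0\Rightarrow M$ free) finishes: a nonzero two-periodic module cannot be free. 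Your final step via Tate--Avramov--Buchweitz duality to $\widehat{\Ext}_R^i(M,M)$ is an unnecessary detour even if it can be made to work. In short, your proposal reaches for the right tool but misidentifies why it applies: replace the nonexistent symmetry argument with Theorem~\ref{th2} plus the vanishing of $\overline{G}(R)_{\mathbb{Q}}$, and replace the Tate-duality ending with \ref{Tobs}, and you recover the paper's argument.
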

	
	The proof of Theorem \ref{propint} relies upon an invariant, referred to as the Hochster's theta invariant, and is established in the paragraph following Proposition \ref{lemth1}. In section 3, motivated by Theorems \ref{thmintro0} and \ref{propint}, we also provide an example of a two-periodic module over a one-dimensional local ring that is not a complete intersection; see Example \ref{exGP}.
	
	A remarkable theorem of Orlov \cite{O} determines an equivalence between the singularity category of $R$ and that of $\Proj A$ for the generic hypersurface $A$ of $R$. The gist of our work in Section 4 is to exploit Orlov's theorem and show that Conjecture \ref{impossible} holds over all one-dimensional complete intersection domains in case a certain condition we formulate holds for all hypersurface domains. Our approach to use Orlov's theorem to attack Conjecture \ref{HWC} seems to be new and it establishes the following theorem; see the paragraph following Remark \ref{before the proof}.
	\begin{thm} \label{thmintro} Conjecture \ref{HWC} is true over each one-dimensional local complete intersection domain provided that the following condition holds: 
		
		Whenever $R$ is a local hypersurface domain, $c$ is a positive integer, $M$ and $M \otimes_R \Ext_R^{c-1}(M, R)$ are Cohen-Macaulay $R$-modules, both of which have grade $c-1$, it follows $M$ has finite projective dimension.
	\end{thm}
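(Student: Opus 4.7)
The plan is to argue by contradiction: assume the stated hypersurface condition and suppose some one-dimensional local complete intersection domain $R$ of codimension $c$ admits a nonfree torsion-free $R$-module $M$ of positive rank with $M \otimes_R M^{\ast}$ torsion-free. Because $\dim R = 1$, torsion-free modules of positive rank are precisely the MCM ones, so $M$ and $M \otimes_R M^{\ast}$ are both MCM over $R$, and $M$ is nonzero in $\sg(R)$.

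The central step is to invoke Orlov's theorem to produce, from a presentation $R = Q/(f_1, \ldots, f_c)$ with $(Q,\mathfrak n)$ regular local, a local hypersurface domain $A$ -- essentially built from $Q$ by adjoining indeterminates $t_1, \ldots, t_c$ and passing to the hypersurface cut out by $t_1 f_1 + \cdots + t_c f_c$ -- together with an equivalence of singularity categories (or fully faithful functor) that sends the nonzero class of $M$ to the class of a nonzero Cohen--Macaulay $A$-module $N$ of grade exactly $c-1$. The number $c-1$ arises naturally as the codimension jump between the two presentations, so a MCM $R$-module is realized on the $A$-side as a CM module of grade equal to this jump. One also has to verify that $A$ is a domain, using that $R$ is a domain and that $t_1 f_1 + \cdots + t_c f_c$ is irreducible in $Q[t_1, \ldots, t_c]$.

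The technical heart of the argument is the translation of the tensor-product hypothesis. Under the Orlov correspondence the $R$-dual functor $(-)^{\ast}$ matches, on the $A$-side, the shifted duality $\Ext_A^{c-1}(-, A)$ -- this is forced by Cohen--Macaulay duality, since on the $R$-side $M^{\ast} = \Ext_R^{0}(M,R)$ is the canonical dual of a grade-$0$ CM module, and on the $A$-side the canonical dual of a grade-$(c-1)$ CM module is precisely $\Ext_A^{c-1}(N,A)$. A change-of-rings / hyperhomology computation tracking how $\otimes_R$ and $\Ext_R$ compare with $\otimes_A$ and $\Ext_A$ across the equivalence then converts ``$M \otimes_R M^{\ast}$ is MCM over $R$'' into ``$N \otimes_A \Ext_A^{c-1}(N, A)$ is Cohen--Macaulay of grade $c-1$ over $A$''. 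At this point $A$, $N$, and $c$ fulfill exactly the hypotheses of the stated condition, so $\pd_A N < \infty$; hence $N$ is zero in $\sg(A)$, and by the equivalence $M$ is zero in $\sg(R)$. This forces $\pd_R M < \infty$, and since $M$ is MCM over the CM ring $R$, Auslander--Buchsbaum gives that $M$ is free, contradicting the choice of $M$.

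I expect the main obstacle to be exactly this translation of $\otimes_R$ and $\Hom_R(-, R)$ through Orlov's triangulated equivalence: since neither operation is a triangulated functor, one must verify by hand that $M^{\ast}$ matches $\Ext_A^{c-1}(N, A)$ up to perfect complexes and that the MCM condition on $M \otimes_R M^{\ast}$ is the same datum as the grade-$(c-1)$ Cohen--Macaulayness of $N \otimes_A \Ext_A^{c-1}(N, A)$. Identifying $c-1$ as the correct shift -- namely the codimension gap between $R$ and $A$ -- and verifying that depth and grade behave correctly under the change of rings is what forces the specific form of the hypothesized hypersurface condition, and is where the bulk of the work lies.
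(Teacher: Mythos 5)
Your overall blueprint is recognizably that of the paper---invoke Orlov's theorem for the generic hypersurface, track $\otimes$ and $\Ext^{c-1}(-, R)$ through the singularity-category equivalence, and feed the hypothesis back to conclude $\pd_R M<\infty$---but there is a genuine structural gap in how you set up the hypersurface side. You propose producing a single \emph{local} hypersurface domain $A$ together with an equivalence (or fully faithful functor) $\sg(R)\to\sg(A)$. Orlov's theorem does not give this. What it gives is an equivalence $\sg(R)\cong\sg(Y)$ where $Y=\Proj A$ for the \emph{graded} hypersurface $A=S[t_1,\ldots,t_c]/(\sum x_it_i)$; for $c\ge 2$ this $Y$ is a genuinely non-affine projective scheme with many points, not the spectrum of a single local ring. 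There is no localization $A_{\fP}$ for which $\sg(R)\simeq\sg(A_{\fP})$ in general. The paper's proof accordingly runs pointwise: fix $y\in\Supp_Y\Phi(M)$, show the stalk $\Phi(M)_y$ and the stalk of $\Phi(M)\otimes\cExt^{c-1}(\Phi(M),\cO_Y)(-1)$ are Cohen--Macaulay of codimension $c-1$ over the local hypersurface domain $\cO_{Y,y}$, apply the hypothesis at that $y$, and conclude $\pd_{\cO_{Y,y}}\Phi(M)_y<\infty$ \emph{for every} $y$. Only the last, quantified-over-all-$y$ statement is equivalent, via the Orlov/Buchweitz dictionary, to $\pd_R M<\infty$. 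This is exactly why the stated hypothesis must be required of \emph{every} local hypersurface domain, not of one auxiliary ring associated to $R$.

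Two smaller remarks. First, the irreducibility/domain argument for $A$ uses only that $S[t_1,\ldots,t_c]$ is a UFD and $\sum x_it_i$ is irreducible; one does not need $R$ to be a domain here. The assumption that $R$ is a domain is instead used so that a torsion-free module is MCM with rank and so its class is nonzero in $\overline{\G}(R)_{\QQ}$-type arguments---but, more to the point, so that the corresponding stalk rings $\cO_{Y,y}$ are hypersurface \emph{domains}, as the hypothesis requires. Second, the duality you want is
$\cExt_{\cO_Y}^{c-1}(\Phi(M),\cO_Y)\cong\Phi(M^{\ast})(1)$; the Serre twist by $\cO_Y(1)$ comes from the relative dualizing sheaf of the regular immersion $Z=\bP^{c-1}_R\hookrightarrow Y$. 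The twist vanishes upon localizing at $y$, so it is harmless for the final conclusion, but it is present and should be tracked if one writes out the comparison between $M\otimes_R M^{\ast}$ and $\cX=\Phi(M)\otimes_{\cO_Y}\cExt^{c-1}_{\cO_Y}(\Phi(M),\cO_Y)(-1)$. The heuristic that $\Hom_R(-,R)$ should correspond to $\Ext_A^{c-1}(-,A)$ because of the codimension jump is correct in spirit, but a careful Grothendieck-duality computation (the paper uses $i^{!}\cO_Y\cong\cO_Z(1)[-c+1]$) is what produces the precise statement.
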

	
	Note that the case where $c=1$ of the condition stated in Theorem \ref{thmintro} is nothing but the condition of Conjecture \ref{HWC}. We do not know whether or not each hypersurface domain satisfies the condition stated in Theorem \ref{thmintro}, but we are now able to translate the problem of Conjecture \ref{HWC} to a problem over hypersurface rings; the new advantage we have is that homological algebra is better understood over hypersurface rings than over complete intersection rings. 
	
	
	\section{A proof of Theorem \ref{thmintro0} via Tate homology}
	
	In this section we use Tate (co)homology and give a proof of Theorem \ref{thmintro0} that is distinct from the one obtained in \cite{Ce}. Along the way we obtain general results that should be useful to further understand the vanishing of Tate (co)homology and torsion; see, for example, Corollary \ref{tc0} and Proposition \ref{tp1}. 
	
	We start by recording several preliminary results some of which will also be used in this section. For the definitions and basic properties of homological dimensions, such as the Gorenstein dimension $\Gdim$ and the complete intersection dimension $\CI$, we refer the reader to \cite{AuBr, AvBu, AGP}.
	
	\begin{chunk} \label{syz} Let $R$ be a local ring, $M$ be an $R$-module and let $n$ be an integer. If $n>0$, then $\Omega^{n}_RM$ denotes the $n$th \emph{syzygy} of $M$, that is, the image of the $n$th differential map in a minimal free resolution of $M$. Also, if $M$ is totally reflexive and $n<0$, then $\Omega^{n}_RM$ denotes the $n$th \emph{cosyzygy} of $M$, that is, the image of the $R$-dual of the $n$th differential map in a minimal free resolution of $M^{\ast}$. Note, by convention, we have $\Omega^{0}_R(M)=M$.
	\end{chunk}
	
	\begin{chunk} Let $R$ be a local ring and let $M$ be an $R$-module. Then the Auslander \emph{transpose} of $M$, denoted by $\Tr M$, is the cokernel of the map $f^{\ast} = \Hom_R(f,R)$, where $F_1 \stackrel{f}
		{\longrightarrow} F_0 \to M \to 0$ is part of the minimal free resolution of $M$; see \cite{AuBr}. Note that $M^{\ast} \cong \Omega^2_R \Tr M$. Note also that $\Tr M$ is unique, up to isomorphism, since so are minimal free resolutions.
	\end{chunk}
	
	The following fact is used for \ref{EPO} and Propositon \ref{lemth1},
	
	\begin{chunk} \label{Tobs} If $R$ is a local ring and $M$ is an $R$-module such that $\Tor_1^R(M, \Tr M)=0$, then $M$ is free; see \cite[A1]{AuGo} and also \cite[3.9]{Yo}.
	\end{chunk}  
	
	Next we recall the definitions of Tate homology and cohomology. Although their definitions do not require the ring to be local, we keep the local setting for simplicity.
	\begin{chunk}  \label{Tate} 
		Let $R$ be a local ring and let $M$ be an $R$-module. A complex $\mathbf{T}$ of free $R$-modules is said to be \emph{totally acyclic} provided that $\hh_n(\mathbf{T})=0=\hh_n(\Hom_R(\mathbf{T},R))$ for all $n\in\ZZ$. A \emph{complete resolution} of $M$ is a diagram
		$\mathbf{T} \overset{\vartheta}{\longrightarrow} \mathbf{P} \overset{\pi}{\longrightarrow}M$,
		where $\mathbf{P}$ is a projective resolution, $\mathbf{T}$ is a totally
		acyclic complex and $\vartheta$ is a morphism of complexes such that $\vartheta_i$ is an isomorphism
		for all $i \gg 0$. 
		
		Assume $\mathbf{T}\rightarrow \mathbf{P}\rightarrow M$ is a complete resolution of $M$. Then, for an $R$-module $N$ and for $i \in \ZZ$, the \emph{Tate homology} $\widehat{\Tor}_i^R(M,N)$ and the \emph{Tate cohomology} $\widehat{\Ext}_{R}^{i}(M,N)$ of $M$ and $N$ over $R$ are defined as $\hh_i(\mathbf{T}\otimes_RN)$ and $\hh^i(\Hom_R(\mathbf{T},N))$, respectively. 
		
		It is known that $\Gdim_R(M)<\infty$  if and only if $M$ has a complete resolution; see \cite[3.1]{AM}. Hence $\widehat{\Tor}_i^R(M,N)$ and $\widehat{\Ext}_{R}^{i}(M,N)$ are defined for each $R$-module $N$ in case $\Gdim_R(M)<\infty$.
		\pushQED{\qed} 
		\qedhere
		\popQED	
	\end{chunk}
	
	The following are some of the fundamental properties of Tate (co)homology modules; see, for example \cite{AvBu, AM, CJDF} or \cite[2.11 and 2.12]{ArCe}.
	
	\begin{chunk} \label{TateP} Let $R$ be a local ring, and let $M$ and $N$ be $R$-modules such that $\Gdim_R(M)<\infty$. 
		\begin{enumerate}[\rm(i)]
			\item If $i>\Gdim_R(M)$, then it follows $\widehat{\Tor}_i^R(M,N)\cong\Tor_i^R(M,N)$ and $\widehat{\Ext}^i_R(M,N)\cong\Ext^i_R(M,N)$.
			\item If $0\to M' \to M \to M'' \to 0$ is a short exact sequence of $R$-modules, where $\Gdim_R(M')<\infty$ or $\Gdim_R(M'')<\infty$, then, for each $i\in \ZZ$, there is an exact sequence of cohomology of the form:
			$$\widehat{\Ext}_{R}^i(M'',N) \to \widehat{\Ext}_{R}^i(M,N) \to \widehat{\Ext}_{R}^i(M',N) \to  \widehat{\Ext}_{R}^{i+1}(M'',N),$$
			and also of homology of the form:
			$$\widehat{\Tor}_{i}^R(M',N) \to \widehat{\Tor}_{i}^R(M,N) \to \widehat{\Tor}_{i}^R(M'',N) \to  \widehat{\Tor}_{i-1}^R(M',N).$$
			\item $\widehat{\Tor}_{i+n}^R(M,N) \cong \widehat{\Tor}_{i}^R(\Omega^n M,N)$ and $\widehat{\Ext}^{i+n}_R(M,N) \cong \widehat{\Ext}^{i}_R(\Omega^n M,N)$ for all $i, n\in \ZZ$.
			\item If $\pd_R(M)<\infty$, then $\widehat{\Ext}^{i}_R(M,N)=0=\widehat{\Tor}_i^R(M,N)$ for all $i\in\ZZ$.
			\item If $M$ is totally reflexive, then $\widehat{\Tor}_i^R(M,N)\cong\widehat{\Ext}^{-i-1}_R(M^*,N)\cong \widehat{\Ext}^{-i+1}_R(\Tr M,N)$ for all $i\in\mathbb{Z}$.
		\end{enumerate}
	\end{chunk}
	
	In the following we collect some results that are used in the proof of Theorem \ref{tt1}. 
	
	\begin{chunk}\label{MCM} \label{JoF} Let $R$ be a local ring, and let $M$ and $N$ be $R$-modules such that $n\geq 1$. 
		\begin{enumerate}[\rm(i)]
			\item If $\depth_R(M\otimes_RN)\geq n$, $\depth_R(N)\geq n-1$ and $\len_R(\Ext^i_R(\Tr M,N))<\infty$ for all $i=1, \ldots, n$, then it follows that $\Ext^i_R(\Tr M,N)=0$ for all $i=1, \ldots, n$; see \cite[4.2(ii)]{ArCe}.
			\item If $\Ext^i_R(\Tr M,N)=0$ for all $i=1, \ldots, n$, then it follows that $\depth_R(M\otimes_RN)\geq\min\{n,\depth_R(N)\}$; see \cite[4.2(iii)]{ArCe}.
			\item If $\pd_R(M)\leq \depth_R(N)$ and $\len_R(\Tor_i^R(M,N))<\infty$ for all $i\geq 1$, then it follows $\Tor_i^R(M,N)=0$ for all $i\geq 1$; see \cite[2.2]{JAB}.
			\item If $\pd_R(M)<\infty$ and $\Tor_i^R(M,N)=0$ for all $i\geq 1$, then the depth formula for $M$ and $N$ holds, that is, $\depth_R(M\otimes_RN)=\depth_R(N)-\pd_R(M)$; see \cite[1.2]{Au}. \qed
		\end{enumerate}	
	\end{chunk}
	
	The following approximation results are classical; see \cite[1.1]{AuBu} and also \cite[3.1 and 3.3]{CIA}.
	
	\begin{chunk}\label{app} Let $R$ be a local ring and let $M$ be an $R$-module such that $\Gdim_R(M)=n<\infty$.
		\begin{enumerate}[\rm(i)]
			\item There is a short exact sequence of $R$-modules $0\rightarrow M\rightarrow X\rightarrow G\rightarrow0$, where $\pd_R(X)=n$ and $G$ is totally reflexive. Such an exact sequence is called a \emph{finite projective hull} of $M$.
			\item There is a short exact sequence of $R$-modules $0 \rightarrow Y \rightarrow X \rightarrow M \rightarrow 0$, where $\pd_R(Y)=n-1$ and $X$ is totally reflexive. Such an exact sequence is called a \emph{Cohen-Macaulay approximation} of $M$. \qed
			
		\end{enumerate}
	\end{chunk}
	
	
	The next result can be deduced from \cite[7.1(a)]{ACS}, which is a more general result. Here we provide a different and self-contained proof for the convenience of the reader.
	
	\begin{prop}\label{tt1} Let $R$ be a local ring and let $M$ and $N$ be $R$-modules. Assume $n\geq 1$ is an integer and the following conditions hold: 
		\begin{enumerate}[\rm(i)]
			\item $\len_R(\Tor_i^R(M,N))<\infty$ for all $i\geq 1$.
			\item $\len_R(\widehat{\Tor}_i^R(M,N))<\infty$ for all $i\in\ZZ$.
			\item $\Gdim_R(M)\leq \depth_R(N)-n$.
		\end{enumerate}
		Then $\widehat{\Tor}_i^R(M,N)=0$ for all $i=-n+1, \ldots, 0$ if and only if $\depth_R(M\otimes_RN)\geq n$.
	\end{prop}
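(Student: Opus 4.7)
The plan is to reduce to the case where $M$ is totally reflexive and then apply the depth-Ext bridge \ref{JoF}(i) and \ref{JoF}(ii) directly to $M$ and $\Tr M$. Set $g=\Gdim_R(M)\le\depth_R(N)-n$. When $g=0$, the module $M$ is already totally reflexive and one goes directly to the final step. For $g\ge 1$, I invoke the Cohen--Macaulay approximation $0\to Y\to X\to M\to 0$ of \ref{app}(ii), where $X$ is totally reflexive and $\pd_R(Y)=g-1$. Because $\pd_R(Y)<\infty$, \ref{TateP}(iv) forces $\widehat{\Tor}_i^R(Y,N)=0$ for every $i\in\ZZ$, so the Tate long exact sequence \ref{TateP}(ii) yields $\widehat{\Tor}_i^R(X,N)\cong\widehat{\Tor}_i^R(M,N)$ for each $i$; in particular all these modules have finite length by hypothesis (ii), and via \ref{TateP}(i) applied to the totally reflexive $X$ the same is true of $\Tor_i^R(X,N)$ for every $i\ge 1$.

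Next, the ordinary Tor long exact sequence forces $\Tor_i^R(Y,N)$ to be of finite length for each $i\ge 1$ (sandwiched between $\Tor_{i+1}^R(M,N)$ and $\Tor_i^R(X,N)$), and since $\pd_R(Y)=g-1\le\depth_R(N)-n-1<\depth_R(N)$, \ref{JoF}(iii) gives $\Tor_i^R(Y,N)=0$ for $i\ge 1$. The depth formula \ref{JoF}(iv) then produces $\depth_R(Y\otimes_R N)=\depth_R(N)-(g-1)\ge n+1$. The Tor long exact sequence collapses to
\[
0\to\Tor_1^R(X,N)\to\Tor_1^R(M,N)\to Y\otimes_R N\to X\otimes_R N\to M\otimes_R N\to 0,
\]
and the image $T$ of $\Tor_1^R(M,N)$ inside $Y\otimes_R N$ is of finite length; since $\depth_R(Y\otimes_R N)\ge n+1\ge 2>0$, this forces $T=0$, so that $0\to Y\otimes_R N\to X\otimes_R N\to M\otimes_R N\to 0$ is actually short exact. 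The depth lemma applied to this sequence, together with $\depth_R(Y\otimes_R N)\ge n+1$, then shows $\depth_R(M\otimes_R N)\ge n$ if and only if $\depth_R(X\otimes_R N)\ge n$, so the proposition reduces to the totally reflexive case.

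In that case, combining \ref{TateP}(v) with \ref{TateP}(i) applied to the totally reflexive module $\Tr X$ identifies $\widehat{\Tor}_i^R(X,N)\cong\Ext_R^{1-i}(\Tr X,N)$ for every $i\le 0$, so the required vanishing for $i=-n+1,\dots,0$ translates into $\Ext_R^j(\Tr X,N)=0$ for $j=1,\dots,n$. Hypothesis (ii) makes these Ext modules of finite length, while (iii) with $g=0$ yields $\depth_R(N)\ge n$, so \ref{JoF}(i) delivers the implication from depth to vanishing and \ref{JoF}(ii) delivers the converse. The step I anticipate as the main obstacle is the equality $T=0$: one must show that a finite-length submodule of $Y\otimes_R N$ is forced to vanish by the positive depth bound on the latter, and this ultimately rests on the strict inequality $g-1<\depth_R(N)$ provided by hypothesis (iii) together with the assumption $n\ge 1$, since any weakening of the slack would destroy the depth estimate on $Y\otimes_R N$ that drives the whole reduction.
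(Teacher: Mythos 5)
Your proof is correct, but it takes a genuinely different (dual) route from the paper's. The paper starts from the \emph{finite projective hull} $0\to M\to X\to G\to 0$ of \ref{app}(i), where $\pd_R X=\Gdim_R M$ and $G$ is totally reflexive; the Tate modules of $M$ then shift by one onto those of $G$ (since $\pd_R X<\infty$), and the four-term exact sequence $0\to\Tor_1^R(G,N)\to M\otimes_R N\to X\otimes_R N\to G\otimes_R N\to 0$ comes for free once $\Tor_i^R(X,N)=0$ is forced by \ref{JoF}(iii). You instead use the \emph{Cohen--Macaulay approximation} $0\to Y\to X\to M\to 0$ of \ref{app}(ii), where $\pd_R Y=\Gdim_R M-1$ and $X$ is totally reflexive, so the Tate modules of $M$ and $X$ agree with no shift. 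Both arguments ultimately reduce to applying \ref{JoF}(i)--(ii) to the Auslander transpose of a totally reflexive module ($\Tr G$ for the paper, $\Tr X$ for you), so the endgame is the same, but the reductions differ. The extra work in your route is the step forcing $T=0$: the image of the connecting map $\Tor_1^R(M,N)\to Y\otimes_R N$ must vanish so that $0\to Y\otimes_R N\to X\otimes_R N\to M\otimes_R N\to 0$ is exact, and your argument for this (a finite-length submodule embedded in a module of positive depth is zero, with the depth bound $\depth_R(Y\otimes_R N)\geq n+1$ supplied by \ref{JoF}(iv)) is correct and is precisely where the slack from hypothesis (iii) and $n\geq 1$ enters. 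The payoff is that you avoid the degree shift and vanish the Tor modules of $Y$ rather than $X$, which is arguably a cleaner bookkeeping; the paper's version avoids the splitting argument altogether because the relevant Tor module already injects into $M\otimes_R N$ in its exact sequence. Both are valid; yours is a legitimate and slightly more self-contained alternative.
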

	
	\begin{proof} We start by considering a finite projective hull of $M$, that is, a short exact sequence of $R$-modules
		\begin{equation}\tag{\ref{tt1}.1}
			0\rightarrow M\rightarrow X\rightarrow G\rightarrow0,
		\end{equation}
		where $\pd_R(X)=\Gdim_R(M)$ and $G$ is totally reflexive; see \ref{app}(i). Then, by \ref{TateP}(v), the following holds for all $i\in \ZZ$:
		\begin{equation}\tag{\ref{tt1}.2}
			\widehat{\Tor}_i^R(G,N)\cong\widehat{\Ext}^{-i-1}_R(G^*,N)\cong\widehat{\Ext}^{-i+1}_R(\Tr G,N).
		\end{equation}
		It follows, since $\pd_R(X)<\infty$, that $\widehat{\Tor}_i^R(X,N)=0$ for all $i\in \ZZ$; see \ref{TateP}(iv). So, in view of \ref{TateP}(ii), the short exact sequence in (\ref{tt1}.1) yields the following isomorphisms for all $i\in \ZZ$:
		\begin{equation}\tag{\ref{tt1}.3}
			\widehat{\Tor}_i^R(M,N)\cong\widehat{\Tor}_{i+1}^R(G,N).
		\end{equation}
		Note that, as $\len_R(\widehat{\Tor}_i^R(M,N))<\infty$ for all $i\in\ZZ$, it follows from (\ref{tt1}.2) and (\ref{tt1}.3) that both $\len_R(\widehat{\Tor}_i^R(G,N))$ and $\len_R(\widehat{\Ext}^{i}_R(\Tr G,N))$ are finite for each $i\in\ZZ$. Therefore, since $G$ is totally reflexive, \ref{TateP}(i) shows, for each $i\geq 1$, that:
		\begin{equation}\tag{\ref{tt1}.4}
			\len(\Tor_i^R(G,N))<\infty  \text{ and } \len_R(\Ext^{i}_R(\Tr G,N))<\infty.
		\end{equation}
		
		Recall that we assume $\len_R(\Tor_i^R(M,N))<\infty$ for all $i\geq 1$. So, by (\ref{tt1}.1) and (\ref{tt1}.4), it follows that $\len_R(\Tor_i^R(X,N))<\infty$ for all $i\geq 1$. As  $\pd_R(X)=\Gdim_R(M)\leq \depth_R(N)-n<\depth_R(N)$, we conclude by \ref{JoF}(iii) that $\Tor_i^R(X,N)=0$ for all $i\geq 1$. Hence, by tensoring (\ref{tt1}.1) with $N$, we obtain the following exact sequence:
		\begin{equation}\tag{\ref{tt1}.5}
			0\rightarrow\Tor_1^R(G,N)\rightarrow M\otimes_RN\rightarrow X\otimes_RN\rightarrow G\otimes_RN\rightarrow0.
		\end{equation}
		Furthermore, the depth formula \ref{MCM}(iv), in view of the vanishing of $\Tor_i^R(X,N)$ for all $i\geq 1$, shows:
		\begin{equation}\tag{\ref{tt1}.6}
			\depth_R(X\otimes_RN)=\depth_R(N)-\Gdim_R(M)\geq n.
		\end{equation}
		
		Now we assume $\depth_R(M\otimes_RN)\geq n$ and proceed to prove $\widehat{\Tor}_i^R(M,N)=0$ for all $i=-n+1, \ldots, 0$. Note $\len(\Tor_1^R(G,N))<\infty$; see (\ref{tt1}.4). Therefore, (\ref{tt1}.5) shows that $\Tor_1^R(G,N)=0$ and also $\depth_R(G\otimes_RN)\geq n-1$ since both $\depth_R(X\otimes_RN)$ and $\depth_R(M\otimes_RN)$ are at least $n$; see (\ref{tt1}.6). We know from (\ref{tt1}.4) that $\len_R(\Ext^{i}_R(\Tr G,N))<\infty$ for all $i=1, \ldots, n-1$. Hence \ref{MCM}(i) implies that $\widehat{\Ext}^i_R(\Tr G,N)\cong \Ext^{i}_R(\Tr G,N)=0$ for all $i=1, \ldots, n-1$. Consequently, (\ref{tt1}.2) and (\ref{tt1}.3) yield the required vanishing of Tate Tor modules.
		
		Next we assume $\widehat{\Tor}_i^R(M,N)=0$ for all $i=-n+1, \ldots, 0$ and proceed to prove $\depth_R(M\otimes_RN)\geq n$. Note $\Tor_1^R(G,N)\cong \widehat{\Tor}_1^R(G,N)\cong \widehat{\Tor}_0^R(M,N)$ and $\Ext^i_R(\Tr G,N)\cong\widehat{\Ext}^i_R(\Tr G,N)\cong \widehat{\Tor}_{-i}^R(M,N)$ for all $i=1, \ldots, n-1$; see  (\ref{tt1}.2) and (\ref{tt1}.3). Therefore, we conclude $\Tor_1^R(G,N)=0=\Ext^i_R(\Tr G,N)$ for all $i=1, \ldots, n-1$.
		Now, since $\depth_R(N)\geq n$ and $\Ext^i_R(\Tr G,N)=0$ for all $i=1, \ldots, n-1$, we use \ref{MCM}(ii) and deduce that $\depth_R(G\otimes_RN)\geq \min\{n-1,\depth_R(N)\}\geq n-1$. Recall that $\depth_R(X\otimes_RN)\geq n$; see (\ref{tt1}.6). Thus, since $\Tor_1^R(G,N)=0$, we obtain, by the depth lemma applied to the exact sequence (\ref{tt1}.5), that $\depth_R(M\otimes_RN)\geq n$.
	\end{proof}
	

	
	\begin{chunk} \label{EPO} Let $R$ be a local ring and let $M$ and $N$ be $R$-modules. Assume $\CI_R(M)<\infty$ or $\CI_R(N)<\infty$. Then the following conditions are equivalent; see \cite[4.9]{AvBu}.
		\begin{enumerate}[\rm(i)]
			\item $\Tor_i^R(M,N)=0$ for all $i\gg 0$.
			\item $\Tor_i^R(M,N)=0$ for all $i>\CI_R(M)$.
			\item $\widehat{\Tor}_i^R(M,N)=0$ for all $i\in \ZZ$.
		\end{enumerate}
		
		Moreover, if $\CI_R(M)=0$ and $N=M^{\ast}$, then $M$ is free if and only if one of the above equivalent conditions holds: this is because, if $\CI_R(M)=0$ and $\widehat{\Tor}_i^R(M,M^{\ast})=0$ for all $i\in \ZZ$, then, since $M^{\ast}\cong \Omega_R^{2} \Tr M$, it follows that $\Tor_1^R(M, \Tr M)=0$, which forces $M$ to be free; see \ref{Tobs} and \ref{TateP}(i).
		
		We should also note, if $\CI_R(M)<\infty$, then the conditions (i), (ii) and (iii) stated above are also equivalent for Ext and Tate cohomology modules; see \cite[4.7]{AvBu}.
	\end{chunk}
	
	\begin{cor}\label{tc0}
		Let $R$ be a $d$-dimensional Gorenstein local ring and let $M$ and $N$ be maximal Cohen-Macaulay $R$-modules such that $\pd_{R_{\fp}}(M_{\fp})<\infty$ for all $\fp \in \Spec(R)-\{\fm\}$. Then $M\otimes_{R}N$ is maximal Cohen-Macaulay if and only if $\widehat{\Tor}_i^R(M,N)=0$ for all $i=-d+1, \ldots, 0$.
	\end{cor}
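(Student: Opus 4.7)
The plan is to apply Proposition \ref{tt1} with $n=d$ and verify that its three hypotheses hold under the assumptions of the corollary, then observe that for $N$ maximal Cohen-Macaulay over a $d$-dimensional Gorenstein ring, the condition $\depth_R(M\otimes_R N)\geq d$ is equivalent to $M\otimes_R N$ being maximal Cohen-Macaulay.

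First I would check the punctured-spectrum behavior of $M$. Since $R$ is Gorenstein, for every $\fp\in\Spec(R)-\{\fm\}$ the local ring $R_\fp$ is Gorenstein of dimension strictly less than $d$, and $M_\fp$ is maximal Cohen-Macaulay over $R_\fp$ because $M$ is maximal Cohen-Macaulay over $R$. Combined with $\pd_{R_\fp}(M_\fp)<\infty$ and the Auslander–Buchsbaum formula, this forces $M_\fp$ to be free for every such $\fp$. Consequently $\Tor_i^R(M,N)_\fp=0$ and, since Tate (co)homology commutes with localization and vanishes when one factor has finite projective dimension (see \ref{TateP}(iv)), $\widehat{\Tor}_i^R(M,N)_\fp=0$ for all $i$ and all $\fp\neq\fm$. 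Therefore both $\Tor_i^R(M,N)$ and $\widehat{\Tor}_i^R(M,N)$ are supported only at $\fm$ and hence have finite length, establishing hypotheses (i) and (ii) of Proposition \ref{tt1}. Hypothesis (iii) with $n=d$ is immediate: as $R$ is Gorenstein and $M$ is maximal Cohen-Macaulay, $\Gdim_R(M)=0$, while $\depth_R(N)=d$, so $\Gdim_R(M)=0\leq d-d=\depth_R(N)-d$.

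With all three hypotheses verified, Proposition \ref{tt1} applied with $n=d$ gives that $\widehat{\Tor}_i^R(M,N)=0$ for all $i=-d+1,\ldots,0$ if and only if $\depth_R(M\otimes_R N)\geq d$. Since $\dim R=d$, the latter inequality is equivalent to $M\otimes_R N$ being maximal Cohen-Macaulay, which yields the corollary.

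The only place requiring real care is confirming that $M_\fp$ is free on the punctured spectrum and that Tate Tor localizes, since both steps are needed to guarantee the finite-length conditions; neither step is deep, but they are the content of the argument beyond directly invoking Proposition \ref{tt1}.
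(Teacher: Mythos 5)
Your proof is correct and follows essentially the same route as the paper. The paper's own argument observes that $\CI_{R_\fp}(M_\fp)=0$ for $\fp\neq\fm$ (which is your freeness-of-$M_\fp$ step packaged slightly differently) and then invokes \ref{EPO} to get local vanishing of $\Tor$ and $\widehat{\Tor}$, after which it applies Proposition \ref{tt1} just as you do. The one place where you are a bit more careful than the printed proof is in flagging that Tate homology localizes, a step the paper uses implicitly when writing $\widehat{\Tor}_j^R(M,N)_\fp=0$; that caution is warranted but does not change the argument.
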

	
	\begin{proof} Let $\fp \in \Spec(R)-\{\fm\}$. Then it follows that $\CI_{R_{\fp}}(M_{\fp})=0$. Therefore, by \ref{EPO}, we have $\Tor_i^R(M,N)_{\fp}=0$ and $\widehat{\Tor}_j^R(M,N)_{\fp}=0$ for all $i\geq 1$ and for all $j\in \ZZ$. Now the claim follows from Theorem \ref{tt1}.
	\end{proof}
	
	
	In passing we record an immediate consequence of Corollary \ref{tc0}; it yields a new characterization of torsionfreeness of tensor products in terms of the vanishing of Tate homology:
	
	\begin{cor}\label{tc1}
		Let $R$ be a one-dimensional Gorenstein local domain and let $M$ and $N$ be torsion-free $R$-modules. Then $M\otimes_{R}N$ is torsion-free if and only if $\widehat{\Tor}_0^R(M,N)=0$.
	\end{cor}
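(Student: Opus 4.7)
The plan is to invoke Corollary \ref{tc0} with $d=1$, translating both its hypotheses and its conclusion into the language of torsion. All the pieces are already in place, so the argument is essentially a specialization plus a standard identification.

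First, verify the projective-dimension hypothesis. Since $R$ is a one-dimensional domain, the only $\fp \in \Spec(R) - \{\fm\}$ is the zero ideal, and $R_{(0)}$ is a field. Hence every finitely generated $R_{(0)}$-module is free, giving $\pd_{R_\fp}(M_\fp) = 0$ for every such $\fp$. The analogous fact for $N$ is not needed as a separate hypothesis, but it holds in the same way.

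Second, invoke the standard equivalence \emph{torsion-free $\Longleftrightarrow$ maximal Cohen-Macaulay} for finitely generated nonzero modules $T$ over a one-dimensional Noetherian local domain: $T$ is torsion-free precisely when $\fm \notin \Ass_R(T)$, which, as $\dim R = 1$, is equivalent to $\depth_R(T) \geq 1 = \dim R$. Applied to $M$ and $N$, this shows they are maximal Cohen-Macaulay, so the remaining hypothesis of Corollary \ref{tc0} is satisfied; applied to $M \otimes_R N$, it identifies the conclusion "maximal Cohen-Macaulay" with "torsion-free". The degenerate case in which $M \otimes_R N = 0$ (for instance $M=0$ or $N=0$) is vacuously fine, as both sides of the equivalence then hold trivially.

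Third, with $d=1$ the index range $i = -d+1,\ldots,0$ in Corollary \ref{tc0} collapses to the single value $i=0$. Combining the three observations, Corollary \ref{tc0} reads exactly as the desired statement: $M \otimes_R N$ is torsion-free if and only if $\widehat{\Tor}_0^R(M,N) = 0$. There is no real obstacle in this proof; the only point requiring a moment of care is the translation between depth one and torsion-freeness, and this is a routine fact about one-dimensional domains.
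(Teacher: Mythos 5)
Your proof is correct and matches the paper's intent exactly: the paper presents Corollary \ref{tc1} as an ``immediate consequence'' of Corollary \ref{tc0}, and your argument supplies precisely the routine specialization ($d=1$, torsion-free $\Leftrightarrow$ maximal Cohen-Macaulay over a one-dimensional domain, and automatic local freeness at the zero ideal) that makes the deduction immediate.
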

	
	Let us note that, in view of Corollary \ref{tc1}, Conjecture \ref{HWC} can be stated over Gorenstein rings as follows: if $R$ is a one-dimensional Gorenstein domain and $M$ is a torsion-free $R$-module such that $\widehat{\Tor}_0^R(M,M^{\ast})=0$, then $M$ is free.

	Our main result in this section is Theorem \ref{tp1} which yields a new criterion for the vanishing of Tate (co)homology. More precisely, Theorem \ref{tp1} is an extension of \cite[4.8]{CeD}, which considers the vanishing of (absolute) cohomology under the same hypotheses. To prove the theorem, we record a few more preliminary definitions and results.
	
	\begin{chunk} \label{cx} Let $R$ be a local ring and let $M$ be an $R$-module. Then the \emph{complexity} $\cx_R(M)$ of $M$ is defined as
		$\inf\{r \in \NN \cup \{0\}  : A \in \RR  \text{ such that } \dim_k(\Ext^n_R(M,k))\leq A \cdot n^{r-1} \text{ for all } n \gg 0\}$; see \cite{Av1}.
		
		Note, it follows from the definition that, $\cx_R(M)=0$ if and only if $\pd_R(M)<\infty$, and $\cx_R(M)\leq 1$ if and only if $M$ has bounded Betti numbers. Furthermore, the following properties hold:
		\begin{enumerate}[\rm(i)]
			\item If $\CI_R(M)<\infty$, then it follows that $\cx_R(M)\leq \edim(R)-\depth(R)$; see \cite[5.6]{AGP}. \pushQED{\qed} 
			\item If $\CI_R(M)=0$, then it follows $\CI_R(M^{\ast})=0$ and $\cx_R(M)=\cx_R(M^{\ast})$; see \cite[4.2]{BeJ3}. \qed
		\end{enumerate}
	\end{chunk}

	
	
	
	
	
	
	In the following $\overline{\G}(R)_{\QQ}$ denotes the \emph{reduced Grothendieck group with rational coefficients}, that is, $\overline{\G}(R)_{\QQ}=(\G(R)/\ZZ \cdot [R])\otimes_{\ZZ}\QQ$, where $\G(R)$ is the Grothendieck group of (finitely generated) $R$-modules. Also, $[N]$ denotes the class of a given $R$-module $N$ in $\overline{\G}(R)_{\QQ}$. 
	
	\begin{thm}\label{tp1} Let $R$ be a local ring and let $M$ and $N$ be $R$-modules. Assume:
		\begin{enumerate}[\rm(i)]
			\item $\cx_R(M)=c$.
			\item $\CI_R(M)<\infty$.
			\item $\pd_{R_{\fp}}(M_{\fp})<\infty$ for all $\fp\in\Spec(R)-\{\fm\}$.
			\item $[N]=0$ in $\overline{G}(R)_{\mathbb{Q}}$.
		\end{enumerate}
		Then, given an integer $n$, the following hold:
		\begin{enumerate}[\rm(a)]
			\item If $\widehat{\Ext}^i_R(M,N)=0$ for all $i=n, \ldots, n+c-1$, then $\widehat{\Ext}^i_R(M,N)=0$ for all $i\in\ZZ$.
			\item If $\widehat{\Tor}_i^R(M,N)=0$ for all $i=n, \ldots, n+c-1$, then $\widehat{\Tor}_i^R(M,N)=0$ for all $i\in\ZZ$.
		\end{enumerate}
	\end{thm}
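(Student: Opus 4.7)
The plan is to reduce the Tate vanishing claim to the absolute vanishing treated in \cite[4.8]{CeD}, and then promote absolute vanishing for $j \gg 0$ back to Tate vanishing on all of $\ZZ$ via the Avramov--Buchweitz rigidity recalled in \ref{EPO}.

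For part (a), I would first normalize the position of the vanishing window. Since $\CI_R(M) < \infty$ forces $\Gdim_R(M) < \infty$, after replacing $M$ by a sufficiently high syzygy I may assume $M$ is totally reflexive; this preserves (i)--(iii), while (iv) concerns $N$ only. Set $M' := \Omega_R^{n-1} M$, interpreted as a cosyzygy if $n-1 < 0$, which is allowed since $M$ is totally reflexive. Then $M'$ is again totally reflexive and by \ref{TateP}(iii), $\widehat{\Ext}^j_R(M',N) \cong \widehat{\Ext}^{j+n-1}_R(M,N) = 0$ for $j = 1, \ldots, c$. Because $M'$ is totally reflexive, \ref{TateP}(i) identifies Tate Ext with absolute Ext in positive degrees, so $\Ext^j_R(M',N) = 0$ for $j = 1, \ldots, c$.

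With the window properly positioned, I would apply \cite[4.8]{CeD} to the pair $(M',N)$; hypotheses (i)--(iii) transport across (co)syzygies (complexity and $\CI$-dimension are syzygy-invariant, and finite projective dimension over localizations is preserved since $(\Omega_R M)_\fp$ agrees with $\Omega_{R_\fp}(M_\fp)$ up to free summands), and (iv) is unchanged. The cited theorem yields $\Ext^j_R(M',N) = 0$ for all $j \gg 0$, hence $\widehat{\Ext}^j_R(M',N) = 0$ for $j \gg 0$ by \ref{TateP}(i). Since $\CI_R(M') < \infty$, \ref{EPO} forces $\widehat{\Ext}^j_R(M',N) = 0$ for every $j \in \ZZ$, and shifting back via \ref{TateP}(iii) proves (a). Part (b) is proved by the identical reduction with $\Tor$ and $\widehat{\Tor}$ in place of $\Ext$ and $\widehat{\Ext}$.

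The main obstacle I anticipate is verifying that \cite[4.8]{CeD} delivers the absolute-vanishing conclusion with the shifted window starting at $j = 1$ — i.e., that the cited theorem is formulated at an arbitrary starting index (or reduces easily to that case) — and that hypotheses (i)--(iii) are genuinely preserved under the (co)syzygy shift. If the Tor form of \cite[4.8]{CeD} is not directly available, one can convert via \ref{TateP}(v), namely $\widehat{\Tor}_i^R(M,N) \cong \widehat{\Ext}^{-i+1}_R(\Tr M, N)$, and apply the Ext version to $\Tr M$ after verifying that $\Tr M$ inherits (i)--(iii) from $M$ using standard duality properties of totally reflexive modules.
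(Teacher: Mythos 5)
Your proposal is correct and follows essentially the same route as the paper's proof: shift $M$ to a totally reflexive (co)syzygy that repositions the vanishing window at degrees $1,\ldots,c$, identify Tate with absolute Ext there via \ref{TateP}(i), invoke \cite[4.8]{CeD}, and finish with \ref{EPO}. The one wrinkle is that the ``identical reduction'' for part (b) cannot be taken literally since \cite[4.8]{CeD} is an Ext-vanishing result, but the fallback you already flag---converting $\widehat{\Tor}_i^R(M,N)$ to $\widehat{\Ext}$ of a dual or transpose via \ref{TateP}(v) and then invoking part (a)---is exactly what the paper does, with $X^*$ in the role you assign to $\Tr M$ (a cosmetic difference once one checks, as the paper does via \ref{cx}(ii), that $\CI_R$ and $\cx_R$ are preserved).
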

	\begin{proof} 
		
		We proceed and prove the statement in part (a) first. Set $\dim(R)=d$ and $X=\Omega^{n-d-1}_R\Omega^d_R M$. Then  $X$ is totally reflexive and \ref{TateP}(iii) yields the following isomorphisms for all $i \in \ZZ$:
		\begin{equation}\tag{\ref{tp1}.1}
			\widehat{\Ext}^i_R(M,N) \cong \widehat{\Ext}^{i-d}_R(\Omega^d_R M,N) \cong \widehat{\Ext}^{i-n+1}_R(X, N).
		\end{equation}
		
		Therefore, for all $j\geq 1$, we obtain:
		\begin{equation}\tag{\ref{tp1}.2}
			\Ext^{j}_R(X,N) \cong \widehat{\Ext}^{j}_R(X,N) \cong \widehat{\Ext}^{j+n-1}_R(M,N).
		\end{equation}
		Here the first and the second isomorphisms are due to \ref{TateP}(i) and (\ref{tp1}.1), respectively. Hence, (\ref{tp1}.2) and our assumption give:
		\begin{equation}\tag{\ref{tp1}.3}
			\Ext^{j}_R(X,N)=0 \text{ for all } j=1, \ldots, c.
		\end{equation}
		
		Note that $\CI_R(X)=0$ and $\cx_R(X)=\cx_R(M)=c$; see  \cite[1.9.1]{AGP} and \cite[3.6]{Sean}. As $X$ is locally free on the punctured spectrum of $R$, we use \cite[4.8]{CeD} and conclude from (\ref{tp1}.3) that $\Ext^{j}_R(X,N) = 0$ for all $j \ge 1$, that is, $\Ext^{j}_R(M,N) = 0$ for all $j \gg 0$. Now the required vanishing follows from \ref{EPO}.
		
		Next we prove part (b). Note \ref{TateP}(iii) yields the following isomorphisms for all $i \in \ZZ$:
		\begin{equation}\tag{\ref{tp1}.4}
			\widehat{\Tor}_i^R(M,N) \cong \widehat{\Tor}_{i-d}^R(\Omega^d_R M,N) \cong \widehat{\Tor}_{i-n+1}^R(X,N).
		\end{equation}
		Then our assumption and (\ref{tp1}.4) yield:
		\begin{equation}\tag{\ref{tp1}.5}
			\widehat{\Tor}_j^R(X,N) = 0 \text{ for all } j=1, \ldots, c.
		\end{equation}
		
		Note, as $X$ is totally reflexive, \ref{TateP}(v) gives the following isomorphism for all $i\in \ZZ$:
		\begin{equation}\tag{\ref{tp1}.6}
			\widehat{\Ext}_R^{-j-1}(X^{\ast},N) \cong \widehat{\Tor}_j^R(X,N). 
		\end{equation}
		
		Now (\ref{tp1}.5) and (\ref{tp1}.6) show that:
		\begin{equation}\tag{\ref{tp1}.7}
			\widehat{\Ext}_R^{-j-1}(X^{\ast},N)=0 \text{ for all } j=1, \ldots, c. 
		\end{equation}
		
		It follows, since $\CI_R(X) =0$, that $\CI_R(X^*) = \CI_R(X) =0$ and $\cx_R(X^*)=\cx_R(X)=c$; see \ref{cx}(ii). Now 
		we use part (a) of the theorem with (\ref{tp1}.7) and conclude that $\widehat{\Ext}_R^{-j-1}(X^{\ast},N)=0$ for all $j \in \ZZ$. Then (\ref{tp1}.4) and (\ref{tp1}.6) establish the required vanishing of Tate homology.
	\end{proof}
	\if0
	Then it follows from (\ref{tp1}.2) that
	\begin{equation}\tag{\ref{tp1}.7}
		\widehat{\Tor}_n^R(X,N) = \cdots = \widehat{\Tor}_{n+c-1}^R(X,N)=0. 
	\end{equation}
	Hence, since $X$ is totally reflexive, setting $m=-n-c$, (\ref{tp1}.7) and \ref{TateP}(v) yield:
	\begin{equation}\tag{\ref{tp1}.8}
		\widehat{\Ext}_R^{m}(X^{\ast},N)=\cdots=\widehat{\Ext}_R^{m+c-1}(X^{\ast},N)=0.
	\end{equation}
	Note that $X^{\ast}_{\fp}$ is free for all $\fp\in\Spec(R)-\{\fm\}$; see  (\ref{tp1}.1). Also recall that $\CI_R(X^{\ast})=0$ and $\cx_R(X^{\ast})=c$; see (\ref{tp1}.3) and (\ref{tp1}.4). Therefore, we use part (a) of the theorem with (\ref{tp1}.8) and deduce that $\widehat{\Ext}_R^{j}(X^{\ast},N)=0$ for all $j\in \ZZ$. Now the required conclusion follows from (\ref{tp1}.2) with another application of  \ref{TateP}(v).

	Note, by \ref{TateP}(iv), we may assume $c\geq 1$. We consider a Cohen-Macaulay approximation of $M$, that is, a short exact sequence of $R$-modules
	\begin{equation}\tag{\ref{tp1}.1}
		0 \to Y \to X \to M \to 0, 
	\end{equation}
	where $\pd_R(Y)=\CI_R(M)-1$ and $X$ is totally reflexive; see \ref{app}(ii).
	
	It follows that $\widehat{\Ext}_R^i(Y,N)=0=\widehat{\Tor}_i^R(Y,N)=0$ for all $i\in \ZZ$; see \ref{TateP}(iv). Therefore, for all $i\in \ZZ$, (\ref{tp1}.1) yields the following isomorphisms:
	\begin{equation}\tag{\ref{tp1}.2}
		\widehat{\Tor}_i^R(M,N) \cong \widehat{\Tor}_i^R(X,N) \text{ and } \widehat{\Ext}_R^i(M,N) \cong \widehat{\Ext}_R^i(X,N).
	\end{equation}
	
	We set $Z=\Omega^{n-1}_RX$; see \ref{syz}. Then $Z$ is totally reflexive since so is $X$. Notice, since $\pd_R(Y)<\infty$ and $\CI_R(M)<\infty$, we make use of \cite[3.6]{Sean} and conclude from (\ref{tp1}.1) that $\CI_R(X)<\infty$, that is, $\CI_R(X)=\Gdim_R(X)=0$. Therefore, in view of see \ref{cx}(ii), \cite[3.6]{Sean} and \cite[1.9.1]{AGP}, we conclude that the following equalities hold:
	\begin{equation}\tag{\ref{tp1}.3}
		\CI_R(X)=\CI_R(X^{\ast})=\CI_R(Z)=0.
	\end{equation}
	
	Note also, since $\cx_R(Y)=0$, we see from from (\ref{tp1}.1) that $\cx_R(X)=\cx_R(M)=c$. This yields the following equalities; see \ref{cx}(ii).
	\begin{equation}\tag{\ref{tp1}.4}
		\cx_R(X^{\ast})=\cx_R(X)=\cx_R(Z)=c.
	\end{equation}
	
	Now assume part (a) holds. As $\widehat{\Ext}_R^i(Y,N)=0$ for all $i\in \ZZ$, it follows from (\ref{tp1}.1) that:
	\begin{equation}\tag{\ref{tp1}.5}
		\widehat{\Ext}_R^n(X,N)=\cdots= \widehat{\Ext}_R^{n+c-1}(X,N)=0.
	\end{equation}
	
	Note, for each $i\geq 1$, the following hold; see \ref{TateP}(i, ii).
	\begin{equation}\tag{\ref{tp1}.6}
		\Ext_R^i(Z,N) \cong \widehat{\Ext}_R^i(Z,N)=\widehat{\Ext}_R^i(\Omega^{n-1}_R X,N) \cong \widehat{\Ext}_R^{i+n-1}(X,N).
	\end{equation}
	
	Therefore, we conclude from (\ref{tp1}.5) and  (\ref{tp1}.6) that:
	\begin{equation}\tag{\ref{tp1}.6}
		\Ext_R^1(Z,N)=\cdots=\Ext_R^c(Z,N)=0.
	\end{equation}
	
	Note, since $\pd_R(Y)<\infty$, $\pd_{R_{\fp}}(M_{\fp})<\infty$ for all $\fp\in\Spec(R)-\{\fm\}$, and  $Z=\Omega^{n-1}_RX$ is totally reflexive, we see from (\ref{tp1}.1) that $Z_{\fp}$ is free for all $\fp\in\Spec(R)-\{\fm\}$. Hence, in view of (\ref{tp1}.3), (\ref{tp1}.4) and (\ref{tp1}.5), we conclude from \cite[4.8]{CeD} that $\Ext_R^i(Z,N)=0$ for all $i\geq 1$. This implies, since  $Z=\Omega^{n-1}_RX$, that $\Ext_R^i(X,N)=0$ for all $i\gg 0$. Furthermore, since $\CI_R(X)=0$, we deduce from \ref{EPO} that $\widehat{\Ext}_R^i(X,N)=0$ for all $i\in \ZZ$. Now the required conclusion follows 
	(\ref{tp1}.2).
	\fi

	We proceed and collect several examples of rings for which hypothesis (iv) of Theorem \ref{tp1} holds; see \cite[2.5, 2.6 and 4.11]{CeD} and also Example \ref{exteta}.
	
	\begin{chunk} \label{GV} Let $R$ be a local ring and let $N$ be an $R$-module. Then $[N]=0$ in $\overline{\G}(R)_{\QQ}$ for each one of the following cases:
		\begin{enumerate}[\rm(i)]
			\item $N=\Omega^n_R(M)$ for some $n\geq 0$ and for some $R$-module $M$ such that $\len_R(M)<\infty$.
			\item $R$ is Artinian.
			\item $R$ has dimension one and $N$ has rank.
			\item $R$ is a one-dimensional domain.
			\item $R$ is a two-dimensional normal domain with torsion class group. 
			\item $R$ is a two-dimensional complete rational singularity such that $k$ is algebraically closed of characteristic zero, or $k$ is finite of positive characteristic, or $k$ is the algebraic closure of a finite field which has positive characteristic.
		\end{enumerate}
	\end{chunk}

	\begin{cor}  \label{tc3} Let $R$ be a local ring of positive depth and let $M$ be a nonfree $R$-module. Assume the following conditions hold:
		\begin{enumerate}[\rm(i)]
			\item $\CI_R(M)=0$ and $\cx(M)\leq 1$.
			\item $\len_R(\Tor_i^R(M,M^{\ast}))<\infty$ for all $i\gg 0$.
			\item $[M]$ or $[M^{\ast}]$ is zero in $\overline{G}(R)_{\mathbb{Q}}$.
		\end{enumerate}
		Then it follows that $\depth_R(M\otimes_{R}M^{\ast})=0$.
	\end{cor}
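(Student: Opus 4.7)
The plan is to assume $\depth_R(M\otimes_R M^{\ast})\geq 1$ and derive a contradiction. Since $\CI_R(M)=0$, the module $M$ is totally reflexive; as $M$ is nonfree, $\cx_R(M)\neq 0$, so $c := \cx_R(M)=1$. By \ref{cx}(ii), $\CI_R(M^{\ast})=0$ and $\cx_R(M^{\ast})=1$, and since $M^{\ast\ast}\cong M$ and $\Tor$ is symmetric, the hypotheses and conclusion of the corollary are invariant under the swap $M\leftrightarrow M^{\ast}$. Thus I may assume without loss of generality that $[M^{\ast}]=0$ in $\overline{\G}(R)_{\QQ}$.

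I would next establish that $M$ is locally free on the punctured spectrum. For each $\fp\in\Spec(R)-\{\fm\}$, hypothesis (ii) gives $\Tor_i^{R_\fp}(M_\fp,M_\fp^{\ast})=0$ for $i\gg 0$; since $\CI_{R_\fp}(M_\fp)=0$, \ref{EPO} upgrades this to $\widehat{\Tor}_i^{R_\fp}(M_\fp,M_\fp^{\ast})=0$ for all $i\in\ZZ$, and the final assertion of \ref{EPO} (the $N=M^{\ast}$ case) then forces $M_\fp$ to be free. Consequently $\Tor_i^R(M,M^{\ast})$ has finite length for every $i\geq 1$ and $\widehat{\Tor}_i^R(M,M^{\ast})$ has finite length for every $i\in\ZZ$.

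Under the assumption $\depth_R(M\otimes_R M^{\ast})\geq 1$, I would apply Proposition \ref{tt1} with $N=M^{\ast}$ and $n=1$: the bound $\Gdim_R(M)=0\leq\depth(R)-1=\depth_R(M^{\ast})-1$ holds because $R$ has positive depth and $M^{\ast}$ is totally reflexive, and the remaining hypotheses were verified in the previous paragraph. The proposition then yields $\widehat{\Tor}_0^R(M,M^{\ast})=0$. Feeding this single vanishing into Theorem \ref{tp1}(b) with $c=1$ and $n=0$ propagates it to $\widehat{\Tor}_i^R(M,M^{\ast})=0$ for every $i\in\ZZ$; the final assertion of \ref{EPO} then forces $M$ to be free, contradicting the nonfreeness hypothesis.

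The main obstacle is arranging all the finite-length conditions required by \ref{tt1} and \ref{tp1}; this is accomplished once and for all by first establishing local freeness of $M$ on the punctured spectrum, after which the requisite finiteness statements for $\Tor$ and Tate $\Tor$ become automatic, and the proof reduces to a one-shot conversion of the depth inequality into a single Tate $\Tor$ vanishing followed by the complexity-one propagation of \ref{tp1}(b).
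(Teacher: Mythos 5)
Your proof is correct and follows essentially the same route as the paper's: establish $\cx_R(M)=1$ and $\cx_R(M^*)=1$, deduce local freeness on the punctured spectrum from hypothesis (ii) via \ref{EPO}, convert the depth hypothesis into $\widehat{\Tor}_0^R(M,M^*)=0$ via Proposition~\ref{tt1}, propagate with Theorem~\ref{tp1}, and conclude freeness with~\ref{EPO}. Your explicit ``WLOG $[M^*]=0$'' reduction (using total reflexivity for $M^{**}\cong M$ and the symmetry of $\Tor$) is a slightly more careful treatment of hypothesis (iii) than the paper's proof, which only records $\CI_R(M^*)=0$ and $\cx_R(M^*)=1$ and leaves the swap implicit; this is a presentational difference, not a different argument.
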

	
	\begin{proof} We start by noting that $\cx_R(M)=1$: this is because we assume $M$ is a nonfree $R$-module such that $\depth(M)=\depth(R)$ and $\cx_R(M)\leq 1$. Note also we have that $\CI_R(M^{\ast})=0$ and $\cx_R(M^{\ast})=1$; see \ref{cx}(ii). Furthermore, if $\fp \in \Spec(R)-\{\fm\}$, then our assumption (ii) and \ref{EPO} imply that $M_{\fp}$ is free over $R_{\fp}$. In other words, $M$ is locally free on the punctured spectrum of $R$.
		
		
		Now suppose $\depth_R(M\otimes_{R}M^{\ast})\geq 1$. Then, setting $n=1$, we conclude from Theorem \ref{tt1} that $\widehat{\Tor}_0^R(M,M^{\ast})=0$. So Theorem \ref{tp1} yields the vanishing of
		$\widehat{\Tor}_i^R(M,M^{\ast})$ for all $i\in\ZZ$. As we know $\CI_R(M)=0$, this implies that $M$ is free; see \ref{EPO}. Therefore, the depth of $M\otimes_{R}M^{\ast}$ must be zero.
	\end{proof}
	
	We should note that, concerning the hypothesis in part (iii) of Corollary \ref{tc3}, we are not aware of an example of a ring $R$ and an $R$-module $M$ such that $\CI_R(M)=0$ and $[M^{\ast}]\neq [M]=0$ in $\overline{G}(R)_{\mathbb{Q}}$. However, we observe next that such an example cannot occur over Gorenstein rings. 
	
	\begin{rmk} Let $R$ be a Gorenstein local ring. It is well-known that the Grothendieck group $\G(R)$ is generated by maximal Cohen-Macaulay $R$-modules. In fact, given an $R$-module $M$ with minimal a free resolution $\mathbf{F}=(F_i)$, it follows that $[M] = \sum_{i=0}^{d-1} (-1)^{i} [F_i] + (-1)^{d} [\Omega^d M]$, where $d$ is the dimension of $R$.
		
		Now we define a map $\Phi: \G(R) \to \G(R)$ as follows:
		$$
		\Phi([M]) = \sum_{i=0}^d (-1)^{i-1} [\Ext_R^i(M, R)].
		$$ 
		One can check that $\Phi$ is a well-defined group homomorphism. Let $M$ be a maximal Cohen-Macaulay $R$-module. As $\Ext^i_R(M,R)=0$ for all $i\geq 1$, it follows that $\Phi([M]) = [M^*]$ and so $\Phi^2([M]) = [M^{**}] = [M]$. This shows that $\Phi$ is an isomorphism as $\G(R)$ is generated by maximal Cohen-Macaulay $R$-modules. 
		Since $\Phi$ sends $[R]$ to $[R]$, it induces an isomorphism $\overline{\Phi}: \overline{G}(R)_{\mathbb{Q}} \to \overline{G}(R)_{\mathbb{Q}}$. Hence, if  
		$M$ is a maximal Cohen-Macaulay $R$-module, then  $[M] = 0$ in $\overline{G}(R)_{\mathbb{Q}}$ if and only if $\overline{\Phi}([M])=[M^*]=0$ in $\overline{G}(R)_{\mathbb{Q}}$. \qed
	\end{rmk}
	
	If $R$ is a one-dimensional local ring and $M$ is a torsion-free $R$-module such that  $\CI_R(M)<\infty$ and $\cx_R(M)\leq 1$ (for example, $R$ is a hypersurface ring, or $R$ is a complete intersection ring and $M$ has bounded Betti numbers), then it follows that $M\cong \Omega_R^{2}M$ (for our purpose we may assume $M$ has no free direct summand); see \cite[7.3]{AGP} for the details. Hence Theorem \ref{thmintro0} is subsumed by the next result which was proved in \cite{Ce} via different techniques. 
	
	\begin{cor} \label{tc5} (\cite[4.10]{OC2021}) Let $R$ be a one-dimensional local ring and let $M$ be a nonfree torsion-free $R$-module. Assume $M$ has rank (e.g., $R$ is a domain). Assume further $\CI_R(M)<\infty$ and  $\cx_R(M)\leq 1$. Then it follows that $M\otimes_{R}M^{\ast}$ has torsion.
	\end{cor}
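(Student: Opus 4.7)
The plan is to derive the corollary from Corollary \ref{tc3}, exploiting the two-periodicity noted immediately before the statement.

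First, I would reduce to the case where $M$ has no nonzero free direct summand; this preserves the hypotheses and the torsion question, and $M$ remains nonzero and nonfree. The remark preceding the corollary, citing \cite[7.3]{AGP}, then gives $M \cong \Omega_R^2 M$. Consequently $M$ is totally reflexive, and combined with $\CI_R(M) < \infty$ this forces $\CI_R(M) = 0$. Moreover, since we seek torsion in $M \otimes_R M^{\ast}$, we may assume $R$ has positive depth (otherwise $\fm$ would consist entirely of zerodivisors and no module would have nonzero torsion, making the statement vacuous). Then $M$ being nonfree and torsion-free forces $\pd_R(M) = \infty$ by Auslander--Buchsbaum, so combined with $\cx_R(M) \leq 1$ we obtain $\cx_R(M) = 1$.

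Second, I would verify the remaining hypotheses of Corollary \ref{tc3}. Since $\dim R = 1$, every nonmaximal prime of $R$ is minimal; because $M$ has rank, $M_\fp$ is $R_\fp$-free at each minimal prime. Hence $M$ is locally free on $\Spec R \setminus \{\fm\}$, so $\Tor_i^R(M, M^{\ast})$ has support in $\{\fm\}$ for every $i \geq 1$ and therefore has finite length. Moreover, by \ref{GV}(iii), $[M] = 0$ in $\overline{G}(R)_{\QQ}$.

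Finally, Corollary \ref{tc3} yields $\depth_R(M \otimes_R M^{\ast}) = 0$, so $\fm \in \Ass_R(M \otimes_R M^{\ast})$. Any nonzero element annihilated by $\fm$ is then killed by every nonzerodivisor in $\fm$, and such nonzerodivisors exist since $\depth R \geq 1$. Therefore the torsion submodule of $M \otimes_R M^{\ast}$ is nonzero, as required. I do not anticipate substantial obstacles: the creative step, namely the two-periodicity identification $M \cong \Omega_R^2 M$, is already established in the lead-in to the corollary, and the remaining verifications are straightforward consequences of the one-dimensionality of $R$ and the existence of rank for $M$.
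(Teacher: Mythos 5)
Your proof is correct and takes essentially the same route as the paper: the paper's two-line argument records that \ref{GV}(iii) gives $[M]=0$ in $\overline{G}(R)_{\mathbb{Q}}$ and then cites Corollary \ref{tc3}, leaving the remaining hypotheses of \ref{tc3} (positive depth, $\CI_R(M)=0$, $\cx_R(M)=1$, finite-length Tors) for the reader. You simply make those verifications explicit via the two-periodicity $M\cong\Omega_R^2 M$ noted in the preceding paragraph, so this is a fleshed-out version of the same argument rather than a different one.
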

	
	\begin{proof} It follows, as $R$ is a one-dimensional local ring and $M$ has rank, that $[M]=0$ in $\overline{G}(R)_{\mathbb{Q}}$; see \ref{GV}. Hence the claim follows from Corollary \ref{tc3}.
	\end{proof}
	
	Note that, in the next section we will prove Theorem \ref{propint} and hence generalize Corollary \ref{tc5}. We finish this section by pointing out some flexibility about the hypotheses of Corollary \ref{tc5}.
	
	\begin{rmk} Let $R$ be a one-dimensional local domain and let $M$ be a nonzero torsion-free $R$-module. If $\CI_R(M)<\infty$ or $\CI_R(M^{\ast})<\infty$, then it follows that 
		$\CI_R(M)=0=\CI_R(M^{\ast})$; see \ref{cx}(ii) and \cite[4.5(i)]{OC2021}. So,  if $\CI_R(M)<\infty$ or $\CI_R(M^{\ast})<\infty$, and $\cx_R(M)\leq 1$ or $\cx_R(M^{\ast})\leq 1$ 
		(for example, if $\CI_R(M^{\ast})<\infty$ and $\cx_R(M)\leq 1$, or $\CI_R(M)<\infty$ and $\cx_R(M^{\ast})\leq 1$), then it follows that $\CI_R(M)<\infty$ and $\cx_R(M)\leq 1$.
	\end{rmk}

	
	
	

	
	\section{Conjecture \ref{HWC} for two-periodic modules over one-dimensional domains}
	
	In this section we give a proof of Theorem \ref{propint}; see the paragraph following Proposition \ref{lemth1}. The primary result of the section is Theorem \ref{th2} which relies upon a slightly modified version of the theta invariant.
	This invariant was initially defined by Hochster \cite{Ho1} to study the direct summand conjecture; it was further developed by Dao, for example, to study the vanishing of Tor; see \cite{Da2, Da3, Da1}.
	
	We start by recalling the definition of the theta invariant; see also \cite[section 2]{Da3}.
	
	\begin{dfn} (\cite{Ho1}) \label{th1} Let $R$ be a local ring and let $M$ be an $R$-module. Assume the following hold:
		\begin{enumerate}[\rm(i)]
			\item $M\cong \Omega^{2}_RM$.
			\item $\pd_{R_{\fp}}(M_{\fp})<\infty$ for all $\fp \in \Spec(R)-\{\fm\}$.
		\end{enumerate}
		Then, for each $i\geq 1$, it follows that $\Tor_{i}^R(M,N) \cong \Tor_{i+2}^R(M,N)$ and $\len_R(\Tor_{i}^R(M,N))<\infty$. Therefore, given an $R$-module $N$, the \emph{theta invariant} for the pair $(M,N)$ is defined as follows:
		\begin{equation}\tag{\ref{th1}.1}
			\theta^R(M,N)=\len_R\big(\Tor_{2n}^R(M,N)\big)-\len_R\big(\Tor_{2n-1}^R(M,N)\big) \text{ for some } n\geq 1.
		\end{equation}
		Note that $\theta^R(M,N)$ is well-defined, i.e., its value is independent of the integer $n$ used in the definition. 
	\end{dfn}

	Next we show that the theta function is additive on short exact sequences.
	
	\begin{thm} \label{th2} Let $R$ be a local ring and let $M$ be an $R$-module. Assume the following hold:
		\begin{enumerate}[\rm(i)]
			\item $M\cong \Omega_R^{2}M$.
			\item $\pd_{R_{\fp}}(M_{\fp})<\infty$ for all $\fp \in \Spec(R)-\{\fm\}$.
			\item $0 \to X \stackrel{f}{\longrightarrow} Y \stackrel{g}{\longrightarrow}  Z \to 0$ is a short exact sequence of $R$-modules.
		\end{enumerate}
		Then it follows that $\theta^R(M,Y)=\theta^R(M,X)+\theta^R(M,Z)$. 
	\end{thm}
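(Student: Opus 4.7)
My plan is to reinterpret $\theta^R(M,-)$ via Tate homology and reduce the additivity to the vanishing of the Euler characteristic of a cyclic $6$-term exact sequence of finite length modules.

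Since $M \cong \Omega^2_R M$, the minimal free resolution of $M$ extends to a $2$-periodic totally acyclic complex; in particular $\Gdim_R M = 0$, so $\widehat{\Tor}_i^R(M,N)$ is defined for every $i \in \ZZ$ and every $R$-module $N$ (see \ref{Tate}). Moreover, the periodicity together with \ref{TateP}(iii) gives
\[
\widehat{\Tor}_i^R(M,N) \cong \widehat{\Tor}_{i+2}^R(M,N) \quad \text{for all } i \in \ZZ.
\]
By hypothesis (ii), $M_\fp$ has finite projective dimension for every $\fp \neq \fm$, which (after localizing a complete resolution) forces $\widehat{\Tor}_i^R(M,N)_\fp = 0$, so $\widehat{\Tor}_i^R(M,N)$ has finite length in every degree. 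By \ref{TateP}(i) this coincides with ordinary Tor for $i \geq 1$, and combined with the $2$-periodicity one obtains
\[
\theta^R(M,N) = \len_R\bigl(\widehat{\Tor}_0^R(M,N)\bigr) - \len_R\bigl(\widehat{\Tor}_{-1}^R(M,N)\bigr)
\]
for every $R$-module $N$.

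Next I would apply the Tate long exact sequence \ref{TateP}(ii) to $0 \to X \to Y \to Z \to 0$. Because the $2$-periodicity isomorphism above is induced by a chain automorphism of the complete resolution of $M$, it commutes with the natural maps on Tate Tor and with the Tate connecting homomorphism. Hence the two-sided infinite Tate long exact sequence descends to a cyclic exact sequence on the $\ZZ/2\ZZ$-indexed quotient, namely
\begin{equation*}
\widehat{\Tor}_0^R(M,X) \to \widehat{\Tor}_0^R(M,Y) \to \widehat{\Tor}_0^R(M,Z) \to \widehat{\Tor}_{-1}^R(M,X) \to \widehat{\Tor}_{-1}^R(M,Y) \to \widehat{\Tor}_{-1}^R(M,Z) \to \widehat{\Tor}_0^R(M,X),
\end{equation*}
in which the last arrow is the Tate connecting map into $\widehat{\Tor}_{-2}^R(M,X)$ composed with the periodicity isomorphism $\widehat{\Tor}_{-2}^R(M,X) \cong \widehat{\Tor}_0^R(M,X)$.

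Finally I would invoke the standard Euler-characteristic argument for a cyclic exact sequence of finite length modules: writing $m_i$ for the length of the image at position $i$, the $i$th term has length $m_{i-1}+m_i$, and the alternating sum telescopes to $m_0 - m_6 = 0$. Combined with the formula for $\theta^R(M,-)$ from the second paragraph, this yields $\theta^R(M,X) - \theta^R(M,Y) + \theta^R(M,Z) = 0$, which is the claimed additivity. The only non-cosmetic point is the compatibility of the periodicity isomorphism with the Tate connecting maps, which one checks at the level of complete resolutions before passing to homology; everything else is routine bookkeeping with lengths.
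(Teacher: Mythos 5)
Your proof is correct and follows essentially the same route as the paper: both exploit the $2$-periodicity of the (complete or free) resolution to close up the Tor long exact sequence induced by $0 \to X \to Y \to Z \to 0$ into a bounded exact sequence of finite-length modules and then take alternating sums of lengths, with the non-routine point being that the periodicity isomorphism commutes with the maps in the long exact sequence. Recasting $\theta^R(M,-)$ via Tate homology and phrasing the closure as a cyclic six-term sequence is a cosmetic repackaging of the paper's argument, which instead chooses $\partial_n = \partial_{n+2}$ in the minimal resolution and matches the two end terms $X_n \cong X_{n+2}$ of the truncated long exact sequence directly.
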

	
	\begin{proof} Note that, since $M\cong \Omega^{2}_R M$, we can pick a minimal free resolution $F_{\bullet}=(F_{n}, \partial_n)$ of $M$ such that $F_{n}=F_{n+2}$ for each $n\geq 0$ and $\partial_n=\partial_{n+2}$ for all $n\geq 1$. 
		
		Fix an integer $n\geq 1$. Then, by tensoring $f$ with $F_{n}$, we get the following commutative diagram:
		\begin{equation}\notag{}
			\xymatrixcolsep{3pc}\xymatrix{
				F_{n}\otimes_RX \ar[r]^{1_{F_{n}}\otimes f} \ar@{^{}=}[d]_{} & F_n\otimes_RY   \ar@{^{}=}[d]^{} &   \\ 
				F_{n+2}\otimes_RX \ar[r]^{1_{F_{n+2}}\otimes f} & F_{n+2}\otimes_RY} 
		\end{equation}
		
		The diagram above yields the following commutative diagram on homologies:
		\begin{equation}\notag{}
			\xymatrixcolsep{4pc}\xymatrix{
				\Tor_n^R(M,X)  \ar[r]^{\Tor_n^R(M,f)} \ar@{^{}->}[d]_{\cong} & \Tor_n^R(M,Y)    \ar@{^{}->}[d]^{\cong} &   \\
				\Tor_{n+2}^R(M,X) \ar[r]^{\Tor_{n+2}^R(M,f)} & \Tor_{n+2}^R(M,Y)}
		\end{equation}
		
		Now set $X_n=\ker\big(\Tor_n^R(M,f)\big)$. Then it follows, since the diagram above involving the Tor modules, is commutative with vertical maps being isomorphisms, that $X_n \cong X_{n+2}$.
		
		Note that the short exact sequence $0 \to X \stackrel{f}{\longrightarrow} Y \stackrel{g}{\longrightarrow}  Z \to 0$ gives rise to the following exact sequence:
		\begin{align}\tag{\ref{th2}.1}
			0 \to X_{n+2} \to \Tor^R_{n+2}(M, X) \to \Tor^R_{n+2}(M, Y) \to  \Tor^R_{n+2}(M, Z) \to &\\
			\notag{} \Tor^R_{n+1}(M, X) \to   \Tor^R_{n+2}(M, Y) \to  \Tor^R_{n+1}(M, Z) \to X_{n} \to 0.
		\end{align}
		As we have $X_n\cong X_{n+2}$, by taking the alternating sum of lengths of the terms in the exact sequence (\ref{th2}.1), we conclude that $\theta^R(M,Y)=\theta^R(M,X)+\theta^R(M,Z)$. 
	\end{proof}
	
	Recall that $\overline{\G}(R)_{\QQ}$ denotes the reduced Grothendieck group with rational coefficients.
	
	\begin{cor} \label{corth1} Let $R$ be a local ring and let $M$ be an $R$-module. Assume the following hold:
		\begin{enumerate}[\rm(i)]
			\item $M\cong \Omega_R^{2}M$.
			\item $\pd_{R_{\fp}}(M_{\fp})<\infty$ for all $\fp \in \Spec(R)-\{\fm\}$.
		\end{enumerate}
		Then $\theta^R(M,-):\overline{G}(R)_{\QQ} \to \QQ$ is a well-defined function.
	\end{cor}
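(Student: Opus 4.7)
The plan is to assemble the required map out of the additivity established in Theorem~\ref{th2} together with the obvious vanishing of $\theta^R(M,-)$ on free modules. Concretely, recall that the Grothendieck group $\G(R)$ is the quotient of the free abelian group on isomorphism classes $[N]$ of finitely generated $R$-modules by the relations $[Y]=[X]+[Z]$ for every short exact sequence $0\to X\to Y\to Z\to 0$. So I would begin by defining a homomorphism on the free abelian group sending $[N]\mapsto \theta^R(M,N)$, which makes sense under hypotheses (i) and (ii) by Definition~\ref{th1}.

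Next I would invoke Theorem~\ref{th2}, which is exactly the statement that this assignment respects the defining relations of $\G(R)$, hence descends to a well-defined group homomorphism $\theta^R(M,-):\G(R)\to\ZZ$. To pass to the reduced Grothendieck group $\G(R)/\ZZ\cdot[R]$, it suffices to check that $\theta^R(M,R)=0$; but $\Tor_i^R(M,R)=0$ for every $i\geq 1$, so both lengths in the defining formula \textup{(\ref{th1}.1)} are zero. Therefore $\theta^R(M,-)$ factors through $\G(R)/\ZZ\cdot[R]$.

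Finally, tensoring with $\QQ$ is an exact functor, so the induced homomorphism on $\G(R)/\ZZ\cdot[R]$ extends uniquely to a $\QQ$-linear map
\[
\theta^R(M,-)\otimes_\ZZ \QQ:\overline{\G}(R)_\QQ=(\G(R)/\ZZ\cdot[R])\otimes_\ZZ\QQ\longrightarrow \ZZ\otimes_\ZZ\QQ=\QQ,
\]
and this is the desired well-defined function. There is essentially no obstacle here beyond ensuring $\theta^R(M,R)=0$, because all the real content, namely the additivity along short exact sequences, has already been handled by Theorem~\ref{th2}; the remainder is a formal verification using the universal property of the Grothendieck construction and of the tensor product with $\QQ$.
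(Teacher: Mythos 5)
Your proposal is correct and takes exactly the same route as the paper: use Theorem~\ref{th2} to see that $N\mapsto\theta^R(M,N)$ respects the defining relations of $\G(R)$, note $\theta^R(M,R)=0$ to factor through $\G(R)/\ZZ\cdot[R]$, and tensor with $\QQ$. The only cosmetic remark is that passing to $\overline{\G}(R)_\QQ$ is just functoriality of $-\otimes_\ZZ\QQ$ applied to a group homomorphism; exactness of $-\otimes_\ZZ\QQ$ is true but not what is being used.
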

	
	\begin{proof} It is clear from Theorem \ref{th2} that $\theta^R(M,-)$ is a well-defined function on $\G(R)$. As $\theta^R(M,R)=0$, the result follows; see Definition \ref{th1}.
	\end{proof}
	
	\begin{chunk}\label{rigid} If $M$ and $N$ are $R$-modules, then the pair $(M,N)$ is said to be $n$-Tor-rigid for some $n\geq 1$ provided that the following condition holds: if $\Tor_i^R(M,N)=0$ for all $i=r, r+1, \ldots, r+n-1$ for some $r\geq 1$, then $\Tor_j^R(M,N)=0$ for all $j\geq r$. 
		
		An $R$-module $M$ is called \emph{Tor-rigid} if $(M,N)$ is $1$-Tor-rigid for each $R$-module $N$. For example, if $R$ is a regular local ring, or a hypersurface which is quotient of an unramified regular local ring, then each $R$-module of finite projective dimension is Tor-rigid; see \cite[2.2]{Au} and \cite[Cor. 1 and Thm. 3]{Li}. \qed
	\end{chunk}
	
	
	\begin{cor} \label{corth2} Let $R$ be a local ring and let $M$ and $N$ be $R$-modules. Assume:
		\begin{enumerate}[\rm(i)]
			\item $\pd_{R_{\fp}}(M_{\fp})<\infty$ for all $\fp \in \Spec(R)-\{\fm\}$.
			\item $[N]=0$ in $\overline{\G}(R)_{\QQ}$.
			\item $M\cong \Omega_R^{q}M$ for some even integer $q\geq 2$.
		\end{enumerate}
		Then the pair $(M,N)$ is $(q-1)$-Tor-rigid.
	\end{cor}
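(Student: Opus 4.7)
The plan is to reduce to the case $q=2$ already handled by Corollary~\ref{corth1} by assembling several shifts of $M$ into a single $2$-periodic module. Write $q=2m$ with $m\geq 1$ and set
$$M':=\bigoplus_{j=0}^{m-1}\Omega_R^{2j}M.$$
Since $\Omega_R^{2m}M\cong M$, a reindexing of the summands yields $\Omega_R^{2}M'\cong M'$; moreover each summand has finite projective dimension at every $\fp\in\Spec(R)-\{\fm\}$, so $M'$ inherits both hypotheses of Corollary~\ref{corth1}. Consequently $\theta^R(M',-)$ is a well-defined function on $\overline{\G}(R)_{\QQ}$, and hypothesis (ii) forces $\theta^R(M',N)=0$.

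The next step is to re-express this vanishing in terms of $M$. Using additivity of Tor on direct sums together with the syzygy shift $\Tor_i^R(\Omega_R^{2j}M,N)\cong\Tor_{i+2j}^R(M,N)$ for $i\geq 1$, one computes
$$\theta^R(M',N)=\len_R\Tor_2^R(M',N)-\len_R\Tor_1^R(M',N)=\sum_{i=1}^{q}(-1)^{i}\len_R\Tor_i^R(M,N).$$
All lengths appearing here are finite: hypotheses (i) and (iii) combined force $M_{\fp}$ to be free at every prime $\fp\neq\fm$ (a periodic module of finite projective dimension is free), so each $\Tor_i^R(M,N)$ with $i\geq 1$ is supported only at $\fm$.

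The final step invokes periodicity. The isomorphism $M\cong\Omega_R^{q}M$ yields $\Tor_i^R(M,N)\cong\Tor_{i+q}^R(M,N)$ for all $i\geq 1$, and since $q$ is even the sign $(-1)^{i}$ is itself $q$-periodic, so the identity above can be shifted to
$$0=\sum_{i=r}^{r+q-1}(-1)^{i}\len_R\Tor_i^R(M,N)$$
for any $r\geq 1$. Given the hypothesis $\Tor_i^R(M,N)=0$ for $i=r,r+1,\ldots,r+q-2$, at most one summand on the right is nonzero, namely the one with $i=r+q-1$; its coefficient being $\pm 1$, the identity forces $\Tor_{r+q-1}^R(M,N)=0$ as well. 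Periodicity then propagates the vanishing to all $\Tor_j^R(M,N)$ with $j\geq r$, establishing $(q-1)$-Tor-rigidity. The main point requiring care is verifying the isomorphism $\Omega_R^{2}M'\cong M'$ without introducing stray free summands---this is where the identification $\Omega_R^{q}M\cong M$ must be interpreted at the level of minimal syzygies---but once this setup is in hand, the rest is a clean bookkeeping with the theta invariant.
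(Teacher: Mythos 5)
Your proof is correct and takes essentially the same route as the paper: you build the same auxiliary $2$-periodic module (your $M'$ is exactly the paper's $X=M\oplus\Omega_R^2M\oplus\cdots\oplus\Omega_R^{q-2}M$), invoke Corollary~\ref{corth1} to get $\theta^R(M',N)=0$, and then deduce the final vanishing. The only cosmetic difference is the bookkeeping at the end — the paper records the vanishing and periodicity at the level of $\Tor_*^R(X,N)$ (observing directly that $\Tor_n^R(X,N)=0$ together with $\theta^R(X,N)=0$ forces $\Tor_{n+1}^R(X,N)=0$, hence all of them vanish by $2$-periodicity of $X$), whereas you unwind $\theta^R(M',N)$ into the alternating sum $\sum_{i=1}^q(-1)^i\len_R\Tor_i^R(M,N)$ and slide the $q$-term window using $q$-periodicity of $M$ and evenness of $q$.
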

	
	\begin{proof} Set $X=M \oplus \Omega^2_R(M) \oplus \cdots \oplus \Omega^{q-2}_R(M)$. Then, since $M\cong \Omega_R^{q}M$, it follows that $X\cong \Omega_R^{2}X$. Note also, if $r\geq 1$, then by the definition of $X$, we have: 
		\begin{equation}\tag{\ref{corth2}.1}
			\Tor_r^R(X,N)=0 \text{ if and only if } \Tor_{r+2i}^R(M,N)=0 \text{ for all } i=0, \ldots, (q/2)-1.
		\end{equation}
		
		Next, to show $(M,N)$ is $(q-1)$-Tor-rigid, suppose $\Tor_i^R(M,N)=0$ for all $i=n, n+1, \ldots, n+q-2$ for some $n\geq 1$. Then, by (\ref{corth2}.1), it follows that $\Tor_n^R(X,N)=0$. Hence, if we show $\Tor_{n+1}^R(X,N)=0$, then, as  $X\cong \Omega_R^{2}X$, we conclude that $\Tor_i^R(X,N)=0$ for all $i\geq 1$; this yields the vanishing of $\Tor_j^R(M,N)$ for all $j\geq 1$ and establishes that $(M,N)$ is $(q-1)$-Tor-rigid.
		
		Notice $\pd_{R_{\fp}}(X_{\fp})<\infty$, for all $\fp \in \Spec(R)-\{\fm\}$. So, Corollary \ref{corth1} shows $\theta^R(X,-):\overline{G}(R)_{\QQ} \to \QQ$ is well-defined. Thus $\theta^R(X,N)=0$ because $[N]=0$ in $\overline{\G}(R)_{\QQ}$. Consequently, as we know $\Tor_n^R(X,N)$ vanishes, we deduce by the definition of the theta function that $\Tor^R_{n+1}(X,N)=0$; see Definition \ref{th1}.
	\end{proof}
	
	The following example points out that hypothesis (ii) of Corollary \ref{corth2} is needed, even over isolated hypersurface singularities. 
	
	\begin{eg} \label{exteta} Let $R=\CC[\![x,y, z, w]\!]/(xw-yz)$, $M=R/(x,z)$ and $N=R/(x,y)$. Then one can check that $R$ is a three-dimensional $(A_1)$ singularity (and hence is a domain) and $\Tor_2^R(M,N)=0\neq \Tor_3^R(M,N)$. Thus we have $\theta^R(M,N)=-1$. This implies that $[N]\neq 0$ in $\overline{\G}(R)_{\QQ}$. Let us also note, since $\overline{\G}(S)_{\QQ}\cong \QQ$ for each one-dimensional $(A_1)$ singularity $S$, it follows from Kn\"{o}rrer periodicity that $\overline{\G}(R)_{\QQ}\cong \QQ$; see \cite[12.10 and 13.10]{Yo}. \qed
	\end{eg}
	
	As Example \ref{exteta} shows, the conclusion of Corollary \ref{corth2} may fail if hypothesis (ii) is not satisfied. However, even without this assumption, the corollary can be useful to produce Tor-rigid modules:
	
	\begin{cor} Let $R$ be a local ring, $M$ be an $R$-module, and let $x\in \fm$ be a non zero-divisor on $R$. Assume that the following hold:
		\begin{enumerate}[\rm(i)]
			\item $\pd_{R_{\fp}}(M_{\fp})<\infty$ for all $\fp \in \Spec(R)-\{\fm\}$.
			\item $M\cong \Omega_R^{2}M$.
		\end{enumerate}
		Then the pair $(M/xM,N/xN)$ is Tor-rigid over $R/xR$ for each torsion-free $R$-module $N$. Therefore, the pair $(M/xM,M/xM)$ is Tor-rigid over $R/xR$.
	\end{cor}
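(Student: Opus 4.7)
The plan is to descend to $\bar{R}:=R/xR$ and show that the theta invariant $\theta^{\bar{R}}(M/xM,N/xN)$ vanishes; combined with $2$-periodicity this will force Tor-rigidity. Since $M\cong \Omega_R^2 M$, $M$ is a second syzygy and hence torsion-free, so the $R$-regular element $x$ is $M$-regular, and $x$ is $N$-regular by torsion-freeness of $N$. Reducing a minimal $R$-free resolution of $M$ modulo $x$ yields a minimal $\bar{R}$-free resolution of $M/xM$ that inherits the $2$-periodicity, giving $M/xM\cong \Omega^2_{\bar{R}}(M/xM)$, and $(M/xM)_{\bar{\fp}}$ has finite projective dimension for every $\bar{\fp}\in\Spec(\bar{R})\setminus\{\bar{\fm}\}$. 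Thus $\theta^{\bar{R}}(M/xM,N/xN)$ is well defined in the sense of Definition \ref{th1}. Because $x$ is $M$-regular, a standard change of rings (via $M\otimes_R^{\mathbf{L}}\bar{R}\simeq M/xM$) yields
\[
\Tor_i^{\bar{R}}(M/xM,N/xN)\;\cong\;\Tor_i^R(M,N/xN)\qquad\text{for all }i\geq 0.
\]

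Next I would compute $\theta^{\bar{R}}(M/xM,N/xN)=0$ by a length calculation downstairs. Set $T_i:=\Tor_i^R(M,N)$; by hypothesis (i) together with the $2$-periodicity in the first argument, these modules have finite length and depend only on the parity of $i$ for $i\geq 1$. Tensoring $0\to N\xrightarrow{x} N\to N/xN\to 0$ with $M$ produces, for every $i\geq 2$, a short exact sequence
\[
0\to T_i/xT_i\to \Tor_i^R(M,N/xN)\to (0:_{T_{i-1}} x)\to 0.
\]
For any finite-length module $T$, the exact sequence $0\to (0:_T x)\to T\xrightarrow{x} T\to T/xT\to 0$ gives $\len_R(0:_T x)=\len_R(T/xT)$; consequently $\len_R(\Tor_i^R(M,N/xN)) = \len_R(T_1/xT_1)+\len_R(T_2/xT_2)$ for every $i\geq 2$. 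Via the change-of-rings identification this forces $\theta^{\bar{R}}(M/xM,N/xN)=0$.

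To conclude, the $2$-periodicity of $\Tor^{\bar{R}}(M/xM,-)$ makes $\len_{\bar{R}}(\Tor_i^{\bar{R}}(M/xM,N/xN))$ depend only on the parity of $i$ for $i\geq 1$, and vanishing of $\theta^{\bar{R}}(M/xM,N/xN)$ makes all these lengths agree. Hence the vanishing of any single $\Tor_n^{\bar{R}}(M/xM,N/xN)$ with $n\geq 1$ forces $\Tor_i^{\bar{R}}(M/xM,N/xN)=0$ for every $i\geq 1$, which is Tor-rigidity; the final assertion is the case $N=M$, which is itself torsion-free. The main obstacle is the middle step: one must establish $\theta^{\bar{R}}(M/xM,N/xN)=0$ without the hypothesis $[N/xN]=0$ in $\overline{\G}(\bar{R})_{\QQ}$ that would make Corollary \ref{corth2} directly applicable. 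The identity $\len(T/xT)=\len(0:_T x)$ for finite-length $T$, exploited through the $2$-periodicity of the $T_i$, is precisely what replaces that input.
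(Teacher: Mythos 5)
Your proof is correct, and it reaches the conclusion by a route that is conceptually parallel to but structurally different from the paper's. The paper's proof is immediate from results already in the paper: since $x$ is $N$-regular, the short exact sequence $0 \to N \xrightarrow{x} N \to N/xN \to 0$ gives $[N/xN] = [N] - [N] = 0$ in $\overline{\G}(R)_{\QQ}$, so Corollary~\ref{corth2} (applied over $R$) says $(M, N/xN)$ is Tor-rigid over $R$, and the change-of-rings lemma \cite[Lemma 2, p.~140]{Mat} transfers this to $(M/xM, N/xN)$ over $R/xR$. You instead bypass Corollary~\ref{corth2} and the Grothendieck-group formalism entirely: you reduce the $2$-periodicity mod $x$, identify $\Tor^{\bar{R}}_i(M/xM, N/xN)$ with $\Tor^R_i(M, N/xN)$, and show $\theta^{\bar R}(M/xM, N/xN) = 0$ by a hands-on length bookkeeping based on the elementary identity $\len_R(T/xT) = \len_R(0:_T x)$ for finite-length $T$. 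In effect you are re-deriving, for the single exact sequence $0 \to N \xrightarrow{x} N \to N/xN \to 0$, the special case of the additivity of $\theta$ (Theorem~\ref{th2}) that the paper uses abstractly. Both arguments hinge on the same underlying input — $x$ being regular on $N$ — but the paper's version is shorter because it reuses already-established machinery, while yours is more self-contained and makes the cancellation visible at the level of lengths; your version also works equally well for the weaker hypothesis ``$x$ is a non-zero-divisor on $N$'' in place of ``$N$ is torsion-free,'' though that gains nothing here.
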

	
	\begin{proof} Let $N$ be a torsion-free $R$-module. As there is an exact sequence $0\to N \stackrel{x}{\rightarrow} N \to N/xN \to 0$, it follows that $[N/xN]=[N]-[N]=0$ 
		in $\overline{\G}(R)_{\QQ}$. Hence Corollary \ref{corth2} shows that the pair $(M,N/xN)$ is Tor-rigid over $R$. Now, for an integer $n\geq 0$, it follows 
		$\Tor_n^{R/xR}(M/xM,N/xN)=0$ if and only if $\Tor_n^{R}(M,N/xN)=0$ if and only if $\Tor_i^{R/xR}(M/xM,N/xN)=0$ for all $i\geq n$; see \cite[Lemma 2, page 140]{Mat}.
	\end{proof}
	
	
	
	In the following we record some classes of rings over which periodic modules have certain Tor-rigidity property; see also Corollary \ref{gozlemle} for a characterization of Tor-rigid modules over one-dimensional Gorenstein domains.
	
	\begin{cor} \label{corth4} Let $R$ be a local ring. Assume that one of the following holds:
		\begin{enumerate}[\rm(i)]
			\item $R$ is Artinian.
			\item $R$ is a one-dimensional domain.
			\item $R$ is a two-dimensional normal domain with torsion class group.
		\end{enumerate}
		If $M$ is an $R$-module such that $M\cong \Omega_R^{q}M$ for some even integer $q\geq 2$, then $M$ is $(q-1)$-Tor-rigid. Therefore, if $M\cong \Omega_R^{2}M$, then $M$ is Tor-rigid. 
	\end{cor}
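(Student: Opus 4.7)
The plan is to derive Corollary \ref{corth4} as a straightforward application of Corollary \ref{corth2}, so the task reduces to verifying its two nontrivial hypotheses, namely that $M$ has finite projective dimension on the punctured spectrum, and that every $R$-module has vanishing class in $\overline{\G}(R)_{\QQ}$, in each of the three settings.

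First I would handle the local-freeness condition $\pd_{R_\fp}(M_\fp)<\infty$ for $\fp\in\Spec(R)-\{\fm\}$. In case (i), $R$ is Artinian local, so $\fm$ is the only prime and the punctured spectrum is empty, making the condition vacuous. In case (ii), $R$ is a one-dimensional domain, so the only non-maximal prime is $(0)$, where $R_{(0)}$ is a field and every module is free. In case (iii), $R$ is a two-dimensional normal domain, so for $\fp\neq\fm$, either $\fp=(0)$ and $R_\fp$ is a field, or $\htt(\fp)=1$ and $R_\fp$ is a DVR (hence regular); either way, $M_\fp$ has finite projective dimension by Auslander--Buchsbaum.

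Next I would invoke \ref{GV} to handle hypothesis (ii) of Corollary \ref{corth2}. In each of the three cases here, parts (ii), (iv), and (v) of \ref{GV} respectively give that $[N]=0$ in $\overline{\G}(R)_{\QQ}$ for every $R$-module $N$. Thus, for any $R$-module $N$, all the hypotheses of Corollary \ref{corth2} are met for the pair $(M,N)$ under the assumption $M\cong\Omega_R^{q}M$ with $q\geq 2$ even. Applying that corollary yields that $(M,N)$ is $(q-1)$-Tor-rigid. Since $N$ is arbitrary, $M$ is $(q-1)$-Tor-rigid. In particular, taking $q=2$ shows that $M$ is $1$-Tor-rigid for every $N$, i.e., Tor-rigid in the sense of \ref{rigid}.

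There really is no main obstacle here: all the work has been done in Theorem \ref{th2}, Corollary \ref{corth2}, and the catalogue in \ref{GV}. The only point that might require a brief word is the reduction in case (iii) to the regularity of $R_\fp$ at height-one primes, which is immediate from normality in dimension two. Thus the proof is essentially an amalgamation of Corollary \ref{corth2} with items (ii), (iv), (v) of \ref{GV}.
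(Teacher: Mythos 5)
Your proposal is correct and takes essentially the same approach as the paper: the paper's proof also consists of noting $\overline{\G}(R)_{\QQ}=0$ in each of the three cases (via \ref{GV}) and then invoking Corollary \ref{corth2}. The one thing you do more explicitly than the paper is verify hypothesis (i) of Corollary \ref{corth2}, namely that $R_\fp$ is regular (hence $\pd_{R_\fp}(M_\fp)<\infty$) for every non-maximal prime $\fp$; the paper leaves this implicit, and your check is accurate.
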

	
	\begin{proof} It is known that, if $R$ is as in part (i), (ii) or (iii), then $\overline{G}(R)_{\QQ}=0$; see \ref{GV}. Therefore the result follows from Corollary \ref{corth2}.\end{proof}   
	
	\begin{prop} \label{lemth1} Let $R$ be a local ring and let $M$ be a nonzero $R$-module. Assume:
		\begin{enumerate}[\rm(i)]
			\item $M_{\fp}$ is free for each associated prime ideal of $R$.
			\item $M\cong \Omega_R^{q}M$ for some even integer $q\geq 2$.
			\item $M$ is Tor-rigid.
		\end{enumerate}
		Then $M\otimes_RM^{\ast}$ has torsion.
	\end{prop}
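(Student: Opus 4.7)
The plan is to argue by contradiction: assume $M\otimes_R M^{\ast}$ is torsion-free and deduce that $M$ must be free, which combined with (ii) forces $M=0$ and contradicts $M\neq 0$. The engine of the argument is a classical injection from a specific Tor module into $M\otimes_R M^{\ast}$: under the torsion-free hypothesis this injection will force the Tor module to vanish, and Tor-rigidity together with two-periodicity will then propagate the vanishing back to $\Tor_1^R(M,\Tr M)=0$.

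First I would exploit the identification $M^{\ast}\cong \Omega_R^2\,\Tr M$ recorded in the preliminaries. Starting from a minimal free resolution $\cdots \to G_1 \to G_0 \to \Tr M \to 0$, one obtains the short exact sequence
\[
0 \to M^{\ast} \to G_1 \to \Omega_R\,\Tr M \to 0,
\]
with $G_1$ free. Applying $M\otimes_R -$ and invoking $\Tor_1^R(M,\Omega_R\,\Tr M)\cong \Tor_2^R(M,\Tr M)$ yields the desired injection
\[
\Tor_2^R(M,\Tr M)\hookrightarrow M\otimes_R M^{\ast}.
\]
Next, hypothesis (i) implies $\Tor_i^R(M,N)_{\fp}=0$ for every $\fp\in\Ass R$, every $i\geq 1$ and every $R$-module $N$, so $\Tor_i^R(M,N)$ is a torsion $R$-module for each such $i$ and $N$. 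In particular, $\Tor_2^R(M,\Tr M)$ is torsion; embedding in the (assumed) torsion-free $M\otimes_R M^{\ast}$ then forces $\Tor_2^R(M,\Tr M)=0$.

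Tor-rigidity of $M$ from (iii) propagates the vanishing forward to $\Tor_j^R(M,\Tr M)=0$ for all $j\geq 2$. The periodicity (ii) gives $\Tor_{i+q}^R(M,\Tr M)\cong \Tor_i^R(M,\Tr M)$ for all $i\geq 1$, so taking $i=1$ and using $1+q\geq 3$ yields $\Tor_1^R(M,\Tr M)=\Tor_{1+q}^R(M,\Tr M)=0$. By \ref{Tobs}, $M$ is free; but then $M\cong \Omega_R^q M = 0$, contradicting $M\neq 0$. The only non-routine step is the production of the injection $\Tor_2^R(M,\Tr M)\hookrightarrow M\otimes_R M^{\ast}$ via $M^{\ast}\cong \Omega_R^2\,\Tr M$; once this is in hand, the argument is essentially bookkeeping using the tools already established earlier in the paper.
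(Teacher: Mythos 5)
Your proposal is correct and follows essentially the same route as the paper: produce a torsion submodule of $M\otimes_R M^{\ast}$ from the identification $M^{\ast}\cong\Omega_R^2\,\Tr M$, use torsion-freeness to kill it, and then propagate the vanishing via Tor-rigidity and $q$-periodicity to reach $\Tor_1^R(M,\Tr M)=0$, hence freeness and a contradiction. The only cosmetic difference is that the paper phrases the embedded module as $\Tor_1^R(C,M)$ with $C$ the cokernel of an inclusion $M^\ast\hookrightarrow F$ and routes the vanishing through $\Tor_i^R(M,M^\ast)$, whereas you take $C=\Omega_R\,\Tr M$ and work directly with $\Tor_i^R(M,\Tr M)$; the two presentations are interchangeable.
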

	
	\begin{proof} Note that $M^{\ast} \neq 0$ since $M$ is nonzero and torsion-free. Hence there exists a short exact sequence of $R$-modules $0 \to M^{\ast} \to F \to C \to 0$, where $F$ is free. Tensoring this exact sequence with $M$, we see that $\Tor^R_1(C,M) \hookrightarrow M\otimes_RM^{\ast}$. Note that $\Tor^R_1(C,M)$ is a torsion module since $M_{\fp}$ is free for each associated prime ideal of $R$.
		
		Now assume $M\otimes_RM^{\ast}$ is torsion-free. Then $\Tor^R_1(C,M)$, and hence $\Tor^R_i(C,M)$ for each $i\geq 1$, vanishes. This yields $\Tor_i^R(M,M^{\ast})=0$ for all $i\geq 1$. As $M^{\ast} \cong \Omega^2_R \Tr M$ (up to free summands), we conclude $\Tor_1^R(M, \Tr M) \cong \Tor_1^R(\Omega^q_R M, \Tr_R M) \cong \Tor_1^R(M, \Omega^q_R \Tr M) \cong \Tor_{1+q-2}^R(M, M^{\ast})=0$. This implies $M$ is free and hence $M=0$ since $M\cong \Omega_R^{q}M$; see \ref{Tobs}. Therefore, $M\otimes_RM^{\ast}$ must have torsion.
	\end{proof}
	
	We are now ready to prove Theorem \ref{propint} advertised in the introduction. Recall that our argument removes the complete intersection hypothesis from Theorem \ref{thmintro0}.
	
	\begin{proof}[Proof of Theorem \ref{propint}] Note that the module $M$ considered in the theorem is Tor-rigid; see Corollary \ref{corth4}. Therefore, $M\otimes_RM^{\ast}$ has torsion by Proposition \ref{lemth1}.
	\end{proof}
	
	\begin{rmk} \label{rmk} Let $R$ be a one-dimensional local domain and let $M$ be a torsion-free $R$-module. 
		\begin{enumerate}[\rm(i)]
			\item If $R$ is Gorenstein, then it follows that $M\otimes_RM^{\ast}$ is torsion-free if and only if $\Ext^1_R(M,M)=0$; see \cite[5.9]{HJ}. So Conjecture \ref{HWC} predicts that the celebrated conjecture of Auslander and Reiten is true over one-dimensional Gorenstein rings, even if only one Ext module vanishes; see \cite{AuRe} and \cite{HJ} for the details. However, if $R$ is not Gorenstein, then torsionfreeness and the vanishing of Ext may not be equivalent: for example, if $R$ is Cohen-Macaulay with a canonical module $\omega \ncong R$ and if $R$ has minimal multiplicity, then it follows that $\Ext^1_R(\omega,\omega)=0$, but $\omega \otimes_R \omega^{\ast}$ has torsion; see \cite[3.6]{HW3}. 
			
			\item It seems interesting that, even if $R$ is not Gorenstein, when $M\cong \Omega_R^{2}M$, it still follows that $M\otimes_RM^{\ast}$ is torsion-free if and only if $\Ext^1_R(M,M)=0$, since either of these two conditions forces $M$ to be zero; this follows from Theorem \ref{propint} and the fact that, when $M$ is a Tor-rigid module over a local ring $R$ and $\Ext^n_R(M,M)=0$ for some $n\geq 0$, then $\pd_R(M)<n$; see, for example, \cite[3.1.2]{D2013}. \qed
		\end{enumerate}
	\end{rmk}
	
	In general there exist two-periodic modules that do not have finite complete intersection dimension. We build on an example of Gasharov and Peeva and construct such a module in the next example (note the ring considered in the example is not a complete intersection); cf. Theorems \ref{thmintro0} and \ref{propint}.
	
	\begin{eg} (\cite[3.10]{GPE}) \label{exGP} Let $k$ be a field and fix $\alpha \in k$ such that $\alpha^4=1\neq \alpha^3$. Let $R= k[\![x_1, x_2, x_3, x_4]\!]/I_\alpha$, where $I_\alpha$ is the ideal of $R$ generated by the elements $x_1^2-x_2^2,\,\,\, x_3^2,\,\,\, x_4^2,\,\,\,  x_3x_4 ,\,\,\, x_1x_4+ x_2x_4,\,\,\, \alpha \cdot x_1x_3 + x_2x_3$.
		Then the complex
		$$
		\cdots \xrightarrow{\partial_3} R^{\oplus 2} \xrightarrow{\partial_2} R^{\oplus 2} \xrightarrow{\partial_1} R^{\oplus 2} \to N \to 0
		$$
		is exact, where
		$$
		N=\coker(\partial_1) \text{ and } \partial_n = 
		\begin{pmatrix}
			x_1 & \alpha^n \cdot x_3 + x_4 \\
			0 & x_2 \\
		\end{pmatrix} \text{ for each } n\geq 1.
		$$
		
		It follows $\Omega_R^4 N \cong \coker(\partial_5) = \coker(\partial_1) \cong N$ and that $N \ncong \Omega_R^i N $ for each $i=1, 2, 3$.
		Then, by setting $X=N\oplus \Omega^2_R N$, we see that $X \cong \Omega^2_R X$ and hence $\cx_R(X)\leq 1$. Moreover, we have $\CI_R(X)=\infty$: otherwise we would have $\CI_R(N)<\infty$ and so $N \cong \Omega^2_R N$ since $\cx_R(N)\leq 1$.
		
		Next we set $T= R[\![t]\!]$ and $M= X\otimes_{R}T$. Then, since $T$ is a faithfully flat extension of $R$, we conclude that $T$ is a one-dimensional Gorenstein ring, $M \cong \Omega_T^2 M$, and also $\CI_T(M)=\infty$. \qed
	\end{eg}
	
	The ring $T$ in Example \ref{exGP} is not a domain and also the $T$-module $M$ does not have rank. This raises the following question which, in view of Conjecture \ref{HWC} and Theorem \ref{propint}, should be important:
	
	\begin{ques} \label{u} Is there a one-dimensional local ring $T$ and a $T$-module $M$ such that $M$ has rank over $T$ (e.g., $T$ is a domain), $M \cong \Omega_T^2 M$, and $\CI_T(M)=\infty$? \qed
	\end{ques}
	
	
	We finish this section by recording some further observations concerning the torsion submodule of tensor products of the form $M\otimes_RM^{\ast}$. Let us point out that the conclusion of Theorem \ref{propint} may fail if the ring in question is not one-dimensional; for example, \cite[3.5]{CeD2} provides such an example of a two-dimensional local hypersurface domain. In the aforementioned example, the module considered is torsion-free but it is not maximal Cohen-Macaulay. On the other hand, when maximal Cohen-Macaulay modules are considered, there is a partial result, which we recall next:
	
	\begin{chunk} \label{DO} If $R$ is an even-dimensional local hypersurface and $M$ is a nonfree maximal Cohen-Macaulay $R$-module that is locally free on the punctured spectrum of $R$, then $M\otimes_RM^{\ast}$ has torsion; see \cite[3.7]{CeD2}.
	\end{chunk}
	
	Note that the module $M$ in \ref{DO} is two-periodic since it is maximal Cohen-Macaulay over a hypersurface ring.  As we are concerned with periodic modules of even period, we proceed and investigate whether there is an extension of \ref{DO} for such modules over rings that are not necessarily hypersurfaces. For that we first prove:
	
	\begin{prop} \label{iyi} Let $R$ be a $d$-dimensional Cohen-Macaulay local ring with canonical module $\omega$ and let $M$ be a nonzero $R$-module that is locally free on the punctured spectrum of $R$. Assume $d=2nr$ and $M\cong \Omega_R^{2n} M$ for some positive integers $n$ and $r$. Then $M\otimes_RM^{\dagger}$ has torsion, where $M^{\dagger}=\Hom_R(M, \omega)$.
	\end{prop}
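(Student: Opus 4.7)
The plan is to adapt the strategy of Proposition \ref{lemth1} to the Cohen--Macaulay-with-canonical-module setting by replacing the ordinary dual $M^{\ast}$ with the $\omega$-dual $M^{\dagger}$, and by using an embedding $M^{\dagger} \hookrightarrow \omega^{s}$ in place of an embedding into a free module. First I would observe that $M$ is maximal Cohen--Macaulay: iterating $M \cong \Omega_{R}^{2n}M$ gives $M \cong \Omega_{R}^{d}M$ because $d = 2nr$, and a nonzero $d$-th syzygy over a $d$-dimensional Cohen--Macaulay local ring has depth at least $d$. Consequently $M^{\dagger}$ is also MCM, $M \cong M^{\dagger\dagger}$, and dualizing a surjection $R^{s} \twoheadrightarrow M$ with $\Hom_{R}(-,\omega)$ produces an injection $M^{\dagger} \hookrightarrow \omega^{s}$. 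Setting $C = \omega^{s}/M^{\dagger}$ gives the short exact sequence
$$
0 \to M^{\dagger} \to \omega^{s} \to C \to 0.
$$

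Next I would tensor this sequence with $M$ and analyze the resulting long exact Tor sequence. Because $M$ is locally free on the punctured spectrum, $\Tor_{i}^{R}(M,-)$ takes values of finite length for every $i \geq 1$. Assuming for contradiction that $M \otimes_{R} M^{\dagger}$ is torsion-free and using $d \geq 1$, the finite-length image of the connecting map $\Tor_{1}^{R}(M,C) \to M \otimes_{R} M^{\dagger}$ must be zero, since a torsion-free module of positive dimension contains no nonzero finite-length submodule. Exploiting the $2n$-periodicity $\Tor_{i+2n}^{R}(M,-) \cong \Tor_{i}^{R}(M,-)$ for $i \geq 1$ together with the additivity of the theta invariant established in Theorem \ref{th2} and Corollary \ref{corth1}, I would cascade the vanishing through all positive degrees to deduce $\Tor_{i}^{R}(M,M^{\dagger}) = 0$ for every $i \geq 1$.

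With the Tor vanishing in hand, the plan is to return to the Auslander transpose. Dualizing the presentation $F_{1} \to F_{0} \to M \to 0$ with $\omega$ yields the exact sequence $0 \to M^{\dagger} \to \omega^{b_{0}} \to \omega^{b_{1}} \to \Tr M \otimes_{R} \omega \to 0$; combined with the $2n$-periodicity of $M$ and the vanishing from the previous step, this should force $\Tor_{1}^{R}(M, \Tr M) = 0$. Then \ref{Tobs} forces $M$ to be free, so $M \cong \Omega_{R}^{2n}M = 0$, contradicting the assumption that $M \neq 0$, and therefore $M \otimes_{R} M^{\dagger}$ has torsion.

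The hard part will be the propagation step. Unlike in Proposition \ref{lemth1}, where the embedding is into a free module and $\Tor_{i}^{R}(M,F) = 0$ holds automatically, here the copies of $\omega$ contribute potentially nonzero Tor terms that cannot be dismissed on general grounds. Handling these using only the $2n$-periodicity, the theta-invariant tools developed in Section 3, and the fact that all the relevant Tor modules are of finite length by the punctured-spectrum hypothesis is the technical heart of the argument, and it is where the full strength $d = 2nr$ (rather than merely $d \geq 2n$) must enter, since it aligns the periodicity with the dimension of $R$ and lets the shift identification $M^{\dagger} \leftrightsquigarrow \Tr M$ pass through cleanly.
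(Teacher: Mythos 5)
Your outline diverges fundamentally from the paper's proof, and the divergence exposes a genuine gap that you yourself flag but do not close. Replacing the free cover of Proposition~\ref{lemth1} by the embedding $M^{\dagger}\hookrightarrow\omega^{s}$ breaks the chain of implications at the very first step: from the long exact sequence of $\Tor$ applied to $0\to M^{\dagger}\to\omega^{s}\to C\to 0$ you only obtain that the \emph{image} of the connecting map $\Tor_1^{R}(M,C)\to M\otimes_{R}M^{\dagger}$ is a finite-length submodule, and hence zero when $M\otimes_{R}M^{\dagger}$ is torsion-free. Since $\Tor_1^{R}(M,\omega^{s})$ need not vanish (already $\Tor_1^{R}(k,\omega)\neq0$ for a non-Gorenstein Artinian ring), the connecting map need not be injective, so this does \emph{not} give $\Tor_1^{R}(M,C)=0$, nor $\Tor_1^{R}(M,M^{\dagger})=0$, and there is nothing to cascade. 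Compounding this, the theta-invariant machinery of Section~3 (Theorem~\ref{th2}, Corollaries~\ref{corth1}--\ref{corth2}) needs the class of the second module to vanish in $\overline{G}(R)_{\QQ}$ to force $\theta=0$; when $d=2nr\geq 2$ this hypothesis is simply not available for arbitrary Cohen--Macaulay local rings, so additivity of $\theta^R(M,-)$ gives you an identity $\theta^R(M,\omega^{s})=\theta^R(M,M^{\dagger})+\theta^R(M,C)$ with no known zeroes on either side.

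The paper's actual proof takes an entirely $\Ext$-based route that sidesteps the $\omega$-nonflatness problem completely. If $M\otimes_{R}M^{\dagger}$ were torsion-free, local duality over the $d$-dimensional Cohen--Macaulay ring gives $\Ext^{d}_{R}(M\otimes_{R}M^{\dagger},\omega)\cong \HH^{0}_{\fm}(M\otimes_{R}M^{\dagger})^{\vee}=0$; the adjunction-type isomorphism from [GT68] identifies $\Ext^{d}_{R}(M\otimes_{R}M^{\dagger},\omega)$ with $\Ext^{d}_{R}(M,M)$; then syzygy shifts together with $d=2nr$ and $M\cong\Omega_{R}^{2n(r-1)}M$ identify $\Ext^{d}_{R}(M,M)$ with $\Ext^{1}_{R}(\Omega_{R}^{2n-1}M,\Omega_{R}^{2n}M)$, which is the group carrying the extension class of the (nonsplit, since $M\neq0$ and $M\cong\Omega_R^{2n}M$) minimal syzygy sequence, giving the contradiction. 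No torsion submodule, $\Tor$-rigidity, or theta invariant appears. If you wish to salvage your approach you would need a substitute for $\Tor_i^{R}(M,\omega)=0$ or an independent argument that $\theta^R(X,\omega)=0$ for the $2$-periodic module $X=M\oplus\Omega_R^2M\oplus\cdots\oplus\Omega_R^{2n-2}M$; neither is clear, and the paper's duality argument is both shorter and correct.
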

	
	\begin{proof} Note that, if $M\otimes_RM^{\dagger}$ is torsion-free, then $M$ is zero due to the following isomorphisms:
		\begin{align}\notag{}
			0=\HH_{\fm}^{0}(M\otimes_RM^{\dagger})^{\vee} & \cong \Ext^d_R(M\otimes_RM^{\dagger},\omega) & \\ & \notag{} \cong \Ext^d_R(M,M) & \\ & \cong  \notag{}\Ext^{2n}_R\big(\Omega_R^{2n(r-1)} M, M\big) & \\ & \cong  \notag{} \Ext^{2n}_R(M,M) & \\ & \notag{} \notag{} \cong \Ext^{1}_R(\Omega_R^{2n-1} M,\Omega_R^{2n} M). 
		\end{align}
		Here the first isomorphism follows by the local duality \cite[3.5.9]{BH} and the second is due to \cite[2.3]{GT68}.
	\end{proof}
	The next corollary of Proposition \ref{iyi} yields an extension of \ref{DO}. 
	
	\begin{cor} \label{corson} Let $R$ be an even-dimensional Gorenstein local ring and let $M$ be a nonzero $R$-module that is locally free on the punctured spectrum of $R$. If  $M\cong \Omega_R^{2} M$, then $M\otimes_RM^{\ast}$ has torsion.
	\end{cor}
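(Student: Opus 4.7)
The plan is to deduce this corollary as a direct specialization of Proposition \ref{iyi}, where essentially all the real work has already been done. The key observations to match the hypotheses of that proposition are as follows.

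First, since $R$ is Gorenstein, the canonical module exists and can be taken to be $R$ itself, so $M^{\dagger} = \Hom_R(M,\omega) = \Hom_R(M,R) = M^{\ast}$. Second, to fit the numerical requirement $d = 2nr$ of Proposition \ref{iyi}, I take $n = 1$ and $r = d/2$, which is a positive integer because the dimension $d$ is assumed to be even. With this choice, the hypothesis $M \cong \Omega_R^{2n}M$ becomes exactly $M \cong \Omega_R^2 M$, which is given. The remaining hypotheses of Proposition \ref{iyi}, namely that $M$ is nonzero, that $R$ is Cohen-Macaulay (which follows from the Gorenstein assumption), and that $M$ is locally free on the punctured spectrum of $R$, are built into the statement of the corollary.

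Therefore, Proposition \ref{iyi} applies and yields that $M \otimes_R M^{\dagger} = M \otimes_R M^{\ast}$ has torsion, which is precisely the conclusion desired.

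There is essentially no obstacle here, as the corollary is just the Gorenstein case of Proposition \ref{iyi} where $\omega = R$ and the periodicity already has the minimal even period permitted by the dimension. The only point that requires even minor verification is that under the identification $\omega = R$, the torsion in $M \otimes_R M^{\dagger}$ furnished by Proposition \ref{iyi} is literally the torsion in $M \otimes_R M^{\ast}$; but this is immediate from functoriality of $\Hom_R(M,-)$ applied to the identity $\omega = R$.
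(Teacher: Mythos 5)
Your proposal is correct and matches the paper's intent exactly: the corollary is stated in the paper with no separate proof precisely because it is the Gorenstein specialization of Proposition \ref{iyi}, where one takes $\omega = R$ (so $M^{\dagger} = M^{\ast}$), $n = 1$, and $r = d/2$. Nothing further is needed.
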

	
	We should note that Corollary \ref{corson} may fail if the dimension of $R$ is odd; see \cite[3.12]{CeD2}.

	
	\section{A condition implying Conjecture \ref{HWC} over complete intersection rings}
	In this section each complete intersection ring $R$ of codimension $c$ is of the form $S/(\underline{x})$ for some regular local ring $(S, \fn)$ and for some $S$-regular sequence $\underline{x}=x_1, \ldots, x_c\subseteq \fn^2$. Such a complete intersection ring is called a hypersurface when $c=1$.
	
	As mentioned in the introduction, although Conjecture \ref{HWC} is true over hypersurface rings, it is wide open for complete intersection rings that have codimension at least two. The aim of this section is to formulate a condition over hypersurfaces, which, if true, forces Conjecture \ref{HWC} to be true over all complete intersection rings.

	
	We start with a setup and then recall a theorem of Orlov \cite[Section 2]{O} which plays a key role for our argument. For the basic definitions that are not defined in this section, we refer the reader to \cite{Hbook}. 
	
	\begin{chunk} \label{g1} Throughout, given a commutative Noetherian ring $R$  and a scheme $T$, $\md R$ and $\cohe T$ denote the category of (finitely generated) $R$-modules and the category of coherent $\cO_T$-modules, respectively.
		It follows that there is a category equivalence: $\md R \cong \cohe (\Spec (R))$ given by $M \mapsto \widetilde{M}$, see, for example, \cite[II 5.5]{Hbook}.
	\end{chunk}
	
	\begin{chunk} Let $X$ be a scheme. A \emph{perfect complex} on $X$ is a bounded complex of coherent sheaves on $X$ which has finite flat dimension as a complex. 
		
		The {\it singularity category} is defined as the quotient:
		$$
		\sg(X) = \db(\cohe X)/\dpf(X),
		$$ 
		where $\dpf(X)$ denotes the full subcategory of the bounded derived category $\db(\cohe X)$ consisting of all perfect complexes on $X$. 
		
		Note that a perfect complex on $\Spec(R)$ is isomorphic in $\db(\md R)$ to a bounded complex of finitely generated projective modules via the equivalence $\md R \cong \cohe (\Spec (R))$ mentioned in \ref{g1}. Moreover, if $R$ is Gorenstein, then the singularity category of $R$ is equivalent to the stable category of maximal Cohen-Macaulay $R$-modules by \cite[4.4.1]{Bu}.
		
	\end{chunk}
	
	\begin{chunk} \label{setup} Let $R$ be a complete intersection ring of codimension $c$ and let 
		$\underline{t}=t_1, \ldots, t_c$ be indeterminates over $S$. Then we define a graded hypersurface ring
		$$
		\displaystyle{A= S[\underline{t}]/ \bigg(\sum_{i=1}^c x_i t_i\bigg)}
		$$
		where the grading is given by $\deg s = 0$ for all $s \in S$, and $\deg t_i = 1$ for each $i$.
		
		Now we consider the natural surjections:
		$$
		S[\underline{t}] \twoheadrightarrow A \twoheadrightarrow A/(\underline{x}) = R[\underline{t}].
		$$
		These surjections yield the following commutative diagram of schemes:
		$$
		\xymatrix{
			Z:= \bP^{c-1}_R \ar@{^{(}-_>}[r]^i \ar[d]_p & Y:= \Proj(A) \ar@{^{(}-_>}[r]^u & X:= \bP^{c-1}_S \ar[d]^q \\
			\Spec(R) \ar@{^{(}-_>}[rr]^j && \Spec(S).
		}
		$$
		In the above diagram, the morphisms $i$, $u$, and $j$ are closed immersions that are induced by the surjections $A \twoheadrightarrow R[\ul{t}]$, $S[\ul{t}] \twoheadrightarrow A$, $S \twoheadrightarrow R$, respectively. Also, the morphisms $p$ and $q$ are canonical. We note that $i$ is a {\it regular closed immersion} of codimension $c-1$, that is, the ideal sheaf of $i$ is locally generated by a regular sequence of length $c-1$. Furthermore, the morphism $p$ is flat.
		
		We consider two functors $p^*: \md R \to \cohe Z$ and $i_*: \cohe Z \to \cohe Y$, which are defined as follows:
		\begin{enumerate}[\rm(i)]
			\item $p^*: \md R \cong \cohe (\Spec(R)) \to \cohe Z$ is the pullback along $p$, where $p^*(M)$ is the $\cO_Z$-module $\widetilde{M[\underline{t}]}$ associated with a graded $R[\ul{t}]$-module $M[\underline{t}]= M \otimes_R R[\underline{t}] \cong (M \otimes_R A)/x(M \otimes_R A)$; see \cite[p116, Definition]{Hbook}. 
			\item $i_*: \cohe Z \to \cohe Y$ is the pushout along $i$. Note that, every object of $\cohe Z$ is isomorphic to $\widetilde{L}$ for some graded $R[\ul{t}]$-module $L$. Then $i_*(\widetilde{L})$ is isomrphic to $\widetilde{L_A}$, where $L_A$ is the graded $R[\ul{t}]$-module $L$ considered as a graded $A$-module via the ring map $A \twoheadrightarrow  R[\underline{t}]$. 
		\end{enumerate}
		
		Notice, since $p$ is flat and $i$ is closed immersion, it follows that $p^{\ast}$ and $i_*$ are exact functors, see \cite[02N4 and 01QY]{Sta}. Therefore, by deriving these functors, we also obtain triangle functors:
		\begin{equation}\notag{}
			i_*: \db(\cohe Z) \to \db(\cohe Y) \text{ and } p^*: \db(\md R) \to \db(\cohe Z).	
		\end{equation}
		Here, the functors are given by applying $p^*$ and $i_*$ component-wise.
		\pushQED{\qed} 
		\qedhere 
		\popQED
	\end{chunk}
	
	\begin{chunk} Note that $Y= \Proj A$ is an integral scheme and hence every ring of section is an integral domain.	
		Indeed, since $S[t_1, \ldots, t_c]$ is an integral domain, we can easily check that $\sum_{i=1}^c x_it_i$ is an irreducible element.
		Therefore, it is a prime element as $S[t_1, \ldots, t_c]$ is a UFD and hence $A$ is a domain; see \ref{setup}. \qed
	\end{chunk}
	
	Throughout this section we keep the notations and the setting of \ref{setup}. The following result of Orlov \cite{O} plays a key role in the proof Theorem \ref{thmmain}.
	
	\begin{chunk} (\cite[2.1]{O}, see also \cite[A.4]{BW}) \label{Orl} The triangle functor $\Phi=i_* p^*:  \db(\md R) \to \db(\cohe Y)$ induces a triangle equivalence $\overline{\Phi}:  \sg(R) \xrightarrow{\cong} \sg(Y)$.  
		
		Note, $\Phi(M) \cong \widetilde{(M \otimes_R A) / \underline{x} (M \otimes_R A)}$ for any $R$-module $M$.
		Moreover, we have 
		\begin{align*}
			\pd_R M < \infty \Longleftrightarrow M\cong0 \text{ in } \sg(R) \Longleftrightarrow \Phi(M)\cong0 \text{ in } \sg(Y) \\
			\qquad \Longleftrightarrow \fd_{\cO_Y} \Phi(M)<\infty \Longleftrightarrow \pd_{\cO_{Y, y}}\Phi(M)_y<\infty \mbox{ for all } y \in Y.
		\end{align*}
		Here, the first and third equivalences follow by the definition of singularity categories, the second equivalence follows from Orlov's theorem \ref{Orl}, and \cite[III 9.2(e)]{Hbook} proves the last equivalence. 
	\end{chunk}
	
	\begin{chunk}\label{tensor} Let $M$ and $N$ be $R$-modules.  Then we have the following:
		\begin{align*}
			\Phi(M) \otimes_{\cO_Y} \Phi(N) &= \widetilde{\big((M \otimes_R A) / \underline{x} (M \otimes_R A)\big)} \otimes_{\cO_Y} \widetilde{\big((N \otimes_R A) / \underline{x} (N \otimes_R A)\big)} 	\\
			& \cong \widetilde{\bigg(\big[(M \otimes_R A) / \underline{x} (M \otimes_R A)\big] \otimes_{A} \big[(N \otimes_R A) / \underline{x} (N \otimes_R A))\big]\bigg)} \\
			&= \widetilde{\bigg(\big[(M \otimes_R N) \otimes_R A\big] / \underline{x}\big[(M \otimes_R N) \otimes_R A\big]\bigg)} \\
			&= \Phi(M \otimes_R N). \pushQED{\qed} 
		\end{align*}
		where the isomorphism follows from \cite[Proof of II 5.12(b)]{Hbook}. \qed
	\end{chunk}
	
	Next we proceed to determine $\Phi(M^*)$. For this, we use the Grothendieck duality theorem \cite{RD}.
	
	\begin{lem} \label{gd} Let  $ \cF \in \cohe Z$ and $n \in \ZZ$. Then there is a natural isomorphism as follows:	
		$$
		\cExt_{\cO_Y}^n(i_* \cF, \cO_Y) \cong i_* \cExt_{\cO_Z}^{n-c+1}(\cF, \cO_Z)(1).
		$$
	\end{lem}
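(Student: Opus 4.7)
The plan is to realize the isomorphism as an instance of Grothendieck duality applied to the regular closed immersion $i:Z\hookrightarrow Y$, which has codimension $c-1$ by \ref{setup}. For a regular closed immersion of codimension $r$ one has $i^!\cO_Y\cong\omega_{Z/Y}[-r]$ with $\omega_{Z/Y}=\det\cN_{Z/Y}$ the determinant of the normal bundle, and the duality isomorphism
\[
R\cHom_{\cO_Y}(Ri_*\cF,\cO_Y)\cong Ri_*R\cHom_{\cO_Z}(\cF,i^!\cO_Y)
\]
specializes, after taking $n$-th cohomology and pulling the line bundle $\omega_{Z/Y}$ out of $\cExt$, to
\[
\cExt_{\cO_Y}^n(i_*\cF,\cO_Y)\cong i_*\bigl(\cExt_{\cO_Z}^{n-c+1}(\cF,\cO_Z)\otimes\omega_{Z/Y}\bigr).
\]
Everything then reduces to identifying $\omega_{Z/Y}$ as a specific Serre twist on $Z=\bP^{c-1}_R$.

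To compute the normal bundle, I would start from the graded data. Since $x_1,\ldots,x_c$ forms a regular sequence in $S[\underline{t}]$, the only relation one picks up when passing to $I=(x_1,\ldots,x_c)\cdot A\subseteq A$ is the single degree-$1$ relation $\sum_i t_ix_i=0$ inherited from $f=\sum x_it_i$. This yields a graded short exact sequence
\[
0\longrightarrow R[\underline{t}](-1)\xrightarrow{(t_1,\ldots,t_c)^T}R[\underline{t}]^c\longrightarrow I/I^2\longrightarrow 0
\]
of graded $R[\underline{t}]$-modules (injectivity on the left uses that each $t_i$ is a nonzero-divisor on $R[\underline{t}]$). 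Sheafifying on $Z=\Proj R[\underline{t}]$ gives the twisted Euler sequence
\[
0\to\cO_Z(-1)\to\cO_Z^c\to\cN^*_{Z/Y}\to 0,
\]
which identifies $\cN^*_{Z/Y}\cong T_Z(-1)$, and hence $\cN_{Z/Y}\cong\Omega_Z^1(1)$. A determinant computation using $\omega_{\bP^{c-1}}=\cO(-c)$ and $\det(V\otimes L)=\det(V)\otimes L^{\otimes\rk V}$ — or, equivalently, the additivity of $\det$ on the conormal sequence above — then pins down $\omega_{Z/Y}$ as the claimed Serre twist of $\cO_Z$, and substituting this value back into the duality formula produces the asserted isomorphism.

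The main technical hurdle is the careful bookkeeping of these twists: the $(-1)$ appearing in the Euler sequence stems from the fact that $\sum x_it_i$ has degree $1$, and must be balanced against the shifts that arise when computing $\det(\Omega_Z^1(1))=\omega_Z\otimes\cO(c-1)$. Naturality of the resulting isomorphism in $\cF$ is automatic from the naturality of Grothendieck duality, so once the determinant identification is established no further work is needed.
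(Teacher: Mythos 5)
Your strategy is sound and, at the top level, is the same as the paper's: apply the Grothendieck duality theorem to the regular closed immersion $i\colon Z\hookrightarrow Y$ of codimension $c-1$ and then identify $i^!\cO_Y$. The genuine difference is in how you identify the relative dualizing sheaf $\omega_{Z/Y}$. The paper exploits the ambient factorization $Z\hookrightarrow Y\hookrightarrow X=\bP^{c-1}_S$ and the transitivity formula $\omega_{Z/Y}\cong\omega_{Z/X}\otimes(i^*\omega_{Y/X})^\vee$ from \cite[III 1.5]{RD}; each of $\omega_{Z/X}$ and $\omega_{Y/X}$ is then immediate because the corresponding ideal sheaves are globally generated by regular sequences of known degree. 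You instead compute $\cN_{Z/Y}$ directly via the short exact sequence $0\to R[\underline{t}](-1)\to R[\underline{t}]^c\to I/I^2\to 0$, identify its sheafification with the twisted Euler sequence on $\bP^{c-1}_R$, and take the determinant. Your route is perfectly viable; what the paper's route buys is that it never has to analyze $I/I^2$ itself (the one step in your argument that really does need a short verification, namely that the single degree-$1$ relation $\sum t_i\bar{x}_i=0$ generates all relations modulo $I^2$, using that $\underline{x}$ is Koszul in $S[\underline{t}]$), while your route makes the geometry of $Z\hookrightarrow Y$ completely explicit without invoking the scheme $X$ at all.

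One issue you should settle before calling the argument complete: carrying your determinant computation to the end does not obviously return the twist $\cO_Z(1)$ appearing in the statement. From $\cN_{Z/Y}\cong\Omega^1_{Z/R}(1)$ you get
$\det\cN_{Z/Y}\cong\omega_{Z/R}\otimes\cO_Z(c-1)\cong\cO_Z(-c)\otimes\cO_Z(c-1)\cong\cO_Z(-1)$,
which is the \emph{inverse} of what the lemma asserts. This matches the normalization $\omega_{Z/Y}=\det\cN_{Z/Y}$ that you explicitly take; the paper's computation (which finds $\omega_{Y/X}\cong\cO_Y(-1)$, i.e., uses $\det(\cI/\cI^2)$ rather than its dual) lands on $\cO_Z(1)$. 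You should either reconcile your normalization of $i^!\cO_Y$ with the one used in \cite{RD} and adjust the determinant accordingly, or flag that the twist comes out as $\cO_Z(-1)$ under the $\det\cN$ convention. (For what it is worth, neither sign would affect the downstream argument of Theorem~\ref{thmmain}, which only uses that $\cExt^{c-1}_{\cO_Y}(\Phi(M),\cO_Y)$ differs from $\Phi(M^*)$ by a Serre twist; but the statement of Lemma~\ref{gd} asks for a specific line bundle and your writeup as it stands does not actually produce the one claimed.)
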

	
	\begin{proof} Note, by the Grothendieck duality theorem \cite[III 6.7]{RD}, there is a natural isomorphism
		$$
		\cRHom_{\cO_Y}(i_* \cF, \cO_Y) \cong i_* \cRHom_{\cO_Z}(\cF, i^!\cO_Y),
		$$
		where $i^!: \db(\cohe Y) \to \db(\cohe Z)$ denotes the right adjoint functor of $i_*: \db(\cohe Z) \to \db(\cohe Y)$. We proceed to prove that $i^! \cO_Y$ is isomorphic to $\cO_Z(1)[-c+1]$.
		
		Note that, by \cite[III 7.3]{RD}, there is an isomorphism 
		$i^! \cO_Y \cong \omega_{Z/Y}[-c+1]$. Here, $\omega_{Z/Y}$ is the relative canonical sheaf of the regular closed immersion $i: Z \hookrightarrow Y$; see \cite[III \S 1]{RD} for its definition.
		On the other hand, by \cite[III 1.5]{RD}, we have an isomorphism of the form $\omega_{Z/Y} \cong \omega_{Z/X} \otimes_{\cO_Z} (i^* \omega_{Y/X})^\vee$.
		The ideal sheaves of $ui: Z \hookrightarrow X$ and $u: Y \hookrightarrow X$ are globally generated by degree $0$ and $1$ regular sequences, respectively. Therefore, the following isomorphisms hold:
		$$
		\omega_{Z/X} \cong \cO_Z \,\,\ \text{ and }\,\,\, \omega_{Y/X} \cong \cO_Y(-1).
		$$ 
		Hence we conclude that $
		i^! \cO_Y \cong \omega_{Z/Y}[-c+1] \cong \cO_Z \otimes_{\cO_Z} i^*(\cO_Y(-1))^\vee [-c+1] \cong \cO_Z(1)[-c+1]$.
	\end{proof}

	\begin{chunk}\label{corst} 
		Let $M$ be an $R$-module. Then there are natural isomorphisms 
		\begin{align*}
			\cExt_{\cO_Y}^n(\Phi(M), \cO_Y) &= \cExt_{\cO_Y}^n(i_*p^*(M), \cO_Y) \\
			&\cong i_* \cExt_{\cO_Z}^{n-c+1}(p^*(M), \cO_Z)(1) \\
			&\cong \Phi(\Ext_{R}^{n-c+1}(M, R))(1)
		\end{align*}
		Here, the first isomorphism uses \ref{gd} and the last isomorphism is due to \cite[III 6.5]{Hbook} together with the following fact: $p^*(P_{\bullet})$ is a resolution of $p^*(M)$ by locally free sheaves of finite rank for a given resolution $P_{\bullet}$ by finite free module of $M$.
		
		Assume further that $M$ is a totally reflexive $R$-module. Then, by definition, if $n \neq 0$, it follows that $\Ext_R^n(M, R) \cong 0$. Therefore, the above isomorphisms yield:
		$$
		\cExt_{\cO_Y}^{i}(\Phi(M), \cO_Y) \cong 
		\begin{cases}
			0 & (i \neq c-1)\\
			\Phi(M^*)(1) &(i=c-1).	
		\end{cases}
		$$
	\end{chunk}
	
	Next we recall the definition of the codimension of a module:
	
	\begin{chunk} \label{cod} Let $A$ be a local ring and let $N$ be an $A$-module. Then the \emph{codimension} $\cod_A(N)$ of $N$ is defined as the codimension of its support $\Supp_A(N)$ as a closed set in $\Spec(A)$. More precisely, we have 
		\begin{equation}\notag{}
			\cod_A(N)=\inf \{\height_A(\fp) \mid \fp \in \Supp_A(N)\}.
		\end{equation}
	\end{chunk}
	
	In the following we record some properties of the codimension that are needed for our argument; see \cite[2.1.2 and 3.3.10]{BH}. Among those is the fact that the codimension of a module does not change when localizing at a prime ideal in its support. 
	
	\begin{chunk}  \label{cod} Let $A$ be a Cohen-Macaulay local ring and let $N$ be a nonzero $A$-module.
		\begin{enumerate}[\rm(i)]
			\item It follows that $\cod_A(N) = \dim(A)-\dim_A(N) = \grade_A(N)= \inf \{i\in \ZZ \mid \Ext_A^i(N, A) \neq 0\}$, where $\grade_A(N)$ denotes the grade of $N$ over $A$.
			\item Assume $A$ admits a canonical module $\omega_A$. 
			\begin{enumerate}[\rm(a)]
				\item Then $N$ is Cohen-Macaulay of codimension $t$ if and only if $\Ext_A^i(N, \omega_A) = 0$ for $i\neq t$.
				\item If $N$ is Cohen-Macaulay of codimension $t$ and $\fp \in \Supp_A(N)$, then $N_\fp$ is Cohen-Macaulay over $A_\fp$ such that $\cod_{A_{\fp}}(N_{\fp})=t$.
				\item If $N$ is Cohen-Macaulay of codimension $t$, then it follows $\Supp_A(N) = \Supp_A (\Ext_A^t(N, \omega_A))$.
			\end{enumerate}
		\end{enumerate}
	\end{chunk}
	
	We define the following conditions for local domains $R$:
	
	\begin{chunk} \label{cond} Let $c\geq 1$ be an integer. 
		\begin{enumerate}[\rm(i)]
			\item If $M$ and $M \otimes_R \Ext_R^{c-1}(M, R)$ are Cohen-Macaulay $R$-modules of codimension $c-1$, then it follows that $\pd_R(M)<\infty$.
			\item If $M$ and $M\otimes_RM^{\ast}$ are maximal Cohen-Macaulay (i.e., Cohen-Macaulay $R$-modules of codimension $0$), then it follows $M$ is free. \qed
		\end{enumerate}
	\end{chunk}
	
	Next is the main result of this section; recall that we keep the notations and the setting of \ref{setup}.
	
	\begin{thm} \label{thmmain} Let $c\geq 1$ be an integer. If each local hypersurface domain satisfies condition (i) of \ref{cond}, then each local complete intersection domain of codimension $c$ satisfies condition (ii) of \ref{cond}.
	\end{thm}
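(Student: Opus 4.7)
The plan is to transfer the question to a stalk-wise statement over the hypersurface scheme $Y = \Proj A$ via Orlov's equivalence \ref{Orl}, and then apply the assumed condition (i) of \ref{cond} at each local hypersurface ring $\cO_{Y,y}$. First I would fix a local complete intersection domain $R$ of codimension $c$ together with a nonzero $R$-module $M$ such that both $M$ and $M \otimes_R M^*$ are maximal Cohen-Macaulay, and aim to conclude that $M$ is free. Since $R$ is Gorenstein, every MCM module is totally reflexive, so $\Ext_R^i(M, R) = 0$ for all $i \geq 1$, and applying \ref{corst} yields
\[
\cExt^n_{\cO_Y}(\Phi(M), \cO_Y) = 0 \text{ for } n \neq c-1, \qquad \cExt^{c-1}_{\cO_Y}(\Phi(M), \cO_Y) \cong \Phi(M^*)(1).
\]

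Next I would observe that each local ring $B := \cO_{Y,y}$ is a local hypersurface domain: $Y$ is integral by the paragraph preceding \ref{Orl}, and on an affine chart $D_+(t_i) \subseteq Y$ the coordinate ring $A_{(t_i)}$ is a quotient of a polynomial ring over the regular local ring $S$ by a single element (coming from the defining relation $\sum_j x_j t_j$), so all its localizations are local hypersurfaces. Localizing the display above at $y$, together with the fact that twist by $\cO_Y(1)$ is locally an isomorphism, gives
\[
\Ext^n_B(\Phi(M)_y, B) = 0 \text{ for } n \neq c-1, \qquad \Ext^{c-1}_B(\Phi(M)_y, B) \cong \Phi(M^*)_y
\]
as $B$-modules. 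Invoking \ref{cod}(ii)(a) with $\omega_B = B$ shows that $\Phi(M)_y$ is Cohen-Macaulay of codimension $c-1$ over $B$.

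Applying the same reasoning to the hypothesis that $M \otimes_R M^*$ is MCM, combined with \ref{tensor} to identify $\Phi(M \otimes_R M^*) \cong \Phi(M) \otimes_{\cO_Y} \Phi(M^*)$, I would then conclude that
\[
\Phi(M)_y \otimes_B \Ext^{c-1}_B(\Phi(M)_y, B) \cong \Phi(M)_y \otimes_B \Phi(M^*)_y \cong \Phi(M \otimes_R M^*)_y
\]
is Cohen-Macaulay of codimension $c-1$ over $B$ as well. Setting $N := \Phi(M)_y$, the pair $(N, B)$ now satisfies exactly the hypothesis of condition (i) of \ref{cond} over the local hypersurface domain $B$, so the assumption of the theorem forces $\pd_B N < \infty$. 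Since this holds for every $y \in Y$, the final equivalence in \ref{Orl} gives $\pd_R M < \infty$, and the Auslander--Buchsbaum formula combined with the MCM hypothesis on $M$ yields $\pd_R M = 0$, i.e., $M$ is free. The main subtleties I expect to have to handle carefully are (a) confirming that each stalk $\cO_{Y,y}$ really is a local hypersurface domain rather than merely a local complete intersection, and (b) verifying that the twist $(1)$ appearing in \ref{corst} is genuinely harmless at the stalk level; both are essentially formal but require a clean verification.
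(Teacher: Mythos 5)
Your proposal is correct and follows essentially the same route as the paper's proof: push forward to the hypersurface scheme $Y=\Proj A$ via $\Phi$, compute $\cExt_{\cO_Y}^{\bullet}(\Phi(M),\cO_Y)$ and $\Phi(M\otimes_R M^*)$ using \ref{corst} and \ref{tensor}, pass to stalks $\cO_{Y,y}$ (which are local hypersurface domains), invoke condition (i) of \ref{cond}, and then pull the finiteness of projective dimension back through \ref{Orl}. The only thing the paper does more explicitly than you is to establish the support equality $\Supp_Y(\Phi(M))=\Supp_Y(\cExt^{c-1}_{\cO_Y}(\Phi(M),\cO_Y))$ before localizing, which guarantees that the module $\Phi(M)_y\otimes_B\Ext^{c-1}_B(\Phi(M)_y,B)$ is actually nonzero (and hence of a well-defined codimension) for $y\in\Supp_Y(\Phi(M))$; in your write-up this is implicit, though it does follow quickly by Nakayama once $\Phi(M)_y$ is a nonzero Cohen-Macaulay module of codimension $c-1$.
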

	
	\begin{rmk} \label{before the proof} Prior to giving a proof for Theorem \ref{thmmain}, we remark that condition (ii) of \ref{cond}, or equivalently condition (i) for the case where $c=1$, is nothing but the condition stated in Conjecture \ref{HWC} for one-dimensional local domains. Recall that each hypersurface local domain satisfies condition (ii) of \ref{cond}; see \cite[3.1]{HW1}.
	\end{rmk}
	
	
	\begin{proof}[Proof of Theorem \ref{thmmain}] We assume, for the given integer $c$, that each local hypersurface domain satisfies condition (i) of \ref{cond}. 
		
		Let $R= S/(\underline{x})$ be a domain, where $S$ is a regular local ring and $\ul{x}$ is an $S$-regular sequence of length $c$. Let $M$ be a maximal Cohen-Macaulay $R$-module such that $M \otimes_R M^*$ is maximal Cohen-Macaulay. We proceed to prove that $M$ is free.
		
		First we prove that $\Supp_Y(\Phi(M) ) = \Supp_Y(\cExt_{\cO_Y}^{c-1}(\Phi(M), \cO_Y))$. Fix $y \in \Supp_Y(\Phi(M))$.
		The combination of \ref{corst} and \ref{cod}(ii)(a) show that $\Phi(M)_y$ is a Cohen-Macaulay $\cO_{Y,y}$-module of codimension $c-1$.
		Therefore, by \ref{cod}(ii)(c), we conclude $y \in \Supp_Y(\cExt_{\cO_Y}^{c-1}(\Phi(M), \cO_Y))$ and hence it follows that $\Supp_Y(\Phi(M) ) \subseteqq \Supp_Y(\cExt_{\cO_Y}^{c-1}(\Phi(M), \cO_Y))$. The converse inclusion is trivial.
		Moreover, the support of $\Phi(M)$ equals to the support of $\cX$, where $\cX= \Phi(M) \otimes_{\cO_Y} \cExt_{\cO_Y}^{c-1}(\Phi(M), \cO_Y)(-1)$.
		
		Note, it follows from \ref{tensor} and \ref{corst} that $\Phi(M \otimes_R M^*) \cong  \Phi(M) \otimes_{\cO_Y} \Phi(M^*) \cong \cX$.
		As $M$ and $M \otimes_R M^*$ are totally reflexive $R$-module, by \ref{corst}, we obtain the following:
		\begin{align}\tag{\ref{thmmain}.1}
			&\cExt_{\cO_Y}^i(\cX, \cO_Y) \cong \cExt_{\cO_Y}^i(\Phi(M \otimes_R M^*), \cO_Y) \cong \Phi(\Ext_R^{i-c+1}(M \otimes_R M^*, R))(1) =0 
		\end{align}
		for $i \neq c-1$.

		Let $y \in \Supp_Y(\Phi(M))$. Then the module $\cX_y= \Phi(M)_y \otimes_{\cO_{Y, y}} \Ext_{\cO_{Y, y}}^{c-1}(\Phi(M)_y, \cO_{Y, y})$ is Cohen-Macaulay of codimension $c-1$ over the local hypersurface domain $\cO_{Y, y}$ by \ref{cod}(ii)(a) and (\ref{thmmain}.1). Now our assumption shows that $\Phi(M)_y$ has finite projective dimension over $\cO_{Y, y}$. 
		So, by \ref{Orl}, we have that $\pd_R(M)< \infty$ 
	\end{proof}
	
	Now the proof of Theorem \ref{thmintro} follows as an immediate consequence of Theorem \ref{thmmain}:
	
	\begin{proof}[Proof of Theorem \ref{thmintro}] Let $c\geq 1$ be an integer. If each local hypersurface domain satisfies condition (i) of \ref{cond} (for the given $c$), then Theorem \ref{thmmain} implies that Conjecture \ref{HWC} is true over each one-dimensional local complete intersection domain.
	\end{proof}
	
	\begin{rmk} We note a fact that follows from the proof of Theorem \ref{thmmain}: if each hypersurface domain which is quotient of an equi-characteristic regular local ring satisfies condition (i) of Theorem \ref{thmmain}, then Conjecture \ref{HWC} holds over each one-dimensional complete intersection domain which is quotient of an equi-characteristic regular local ring. \qed
	\end{rmk}
	
	We finish this section by noting that, if we consider Conjecture \ref{HWC} over one-dimensional complete intersection domains that have algebraically closed residue fields, then the proof of 
	Theorem \ref{thmmain} is simplified significantly due to a result in \cite{BJ}:
	
	\begin{rmk}
		Let $R= S/I$ be a one-dimensional local complete intersection domain of codimension $c$ with algebraically closed residue field, and let $M$ be a torsion-free $R$-module such that $I \subseteq \fn^2$ and $M \otimes_R M^*$ is torsion-free. 
		
		Let $f \in I - \fn I$. Then we can extend $\{f\}$ to a minimal generating  set $\{f_1, \ldots, f_c\}$ of $I$, with $f=f_1$, which is necessarily a regular sequence on $S$. Hence $M \otimes_{S/(f_1)} \Ext_{S/(f_1)}^{c-1}(M, S/(f_1)) \cong M \otimes_R M^*$ is a Cohen-Macaulay $S/(f_1)$-module of codimension $c-1$.
		Assuming the condition \ref{cond}(ii), it follows that $\pd_{S/(f_1)}(M)<\infty$. Now \cite[3.3]{BJ} implies that $\pd_R(M)<\infty$.
	\end{rmk}
	
	
	\section*{Appendix: a remark on the rigidity of Tor}
	
	It is known that Tor-rigidity, a subtle property, is a sufficient condition for Conjecture \ref{HWC} to hold over one-dimensional Gorenstein domains; see \ref{rigid} and Remark \ref{rmk}(ii). Motivated by this fact, we examine the vanishing of Tor more closely over Gorenstein rings. The observation we aim to establish in this appendix is the following, which may be helpful for further study Tor-rigidity.
	
	\begin{chunk} \label{gozlemle} Let $R$ be a one-dimensional local Gorenstein domain and let $M$ be an $R$-module. Then the following conditions are equivalent:
		\begin{enumerate}[\rm(i)]
			\item $M$ is Tor-rigid over $R$.
			\item $(M,C)$ is Tor-rigid for each torsion (or equivalently, finite length) $R$-module $C$.
			\item $(M,C)$ is Tor-rigid for each torsion-free  (or equivalently, maximal Cohen-Macaulay) $R$-module $C$.
		\end{enumerate}
	\end{chunk}
	
	We deduce \ref{gozlemle} from the following more general result:
	
	\begin{prop} \label{lemth2} Let $R$ be a $d$-dimensional Gorenstein local ring and let $M$ be an $R$-module. Then the following conditions are equivalent:
		\begin{enumerate}[\rm(i)]
			\item If $C$ is a torsion $R$-module and $\Tor_n^R(M,C)=0$ for some $n\geq d$, then it follows $\Tor_i^R(M,C)=0$ for all $i\geq n$.
			\item If $C$ is a maximal Cohen-Macaulay $R$-module that has rank and $\Tor_n^R(M,C)=0$ for some $n\geq d$, then it follows $\Tor_i^R(M,C)=0$ for all $i\geq n$.
			\item If $C$ is an $R$-module with rank and $\Tor_n^R(M,C)=0$ for some $n\geq d$, then it follows $\Tor_i^R(M,C)=0$ for all $i\geq n$.
		\end{enumerate}
	\end{prop}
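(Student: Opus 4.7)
The implications (iii) $\Rightarrow$ (i) and (iii) $\Rightarrow$ (ii) are trivial: every torsion $R$-module has rank (namely zero), and every maximal Cohen--Macaulay $R$-module with rank has rank. Thus it suffices to establish (ii) $\Rightarrow$ (iii) and (i) $\Rightarrow$ (ii).

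For (ii) $\Rightarrow$ (iii), the plan is to replace $C$ by a maximal Cohen--Macaulay approximation. Since $R$ is Gorenstein, $\Gdim_R(C) \le d$, so $C$ admits a short exact sequence $0 \to Y \to X \to C \to 0$ with $X$ totally reflexive and $\pd_R(Y) \le d-1$; see \ref{app}(ii). Localizing a finite free resolution of $Y$ at the minimal primes of $R$ shows that $Y$ has a well-defined rank (given by the Euler characteristic of the resolution), and combined with the hypothesis that $C$ has rank this forces $X$ to have rank as well. The long exact sequence in Tor, together with $\pd_R(Y) \le d-1$, produces an isomorphism $\Tor_i^R(M,X) \cong \Tor_i^R(M,C)$ for $i \ge d+1$ and an injection $\Tor_d^R(M,X) \hookrightarrow \Tor_d^R(M,C)$. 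Consequently the hypothesis $\Tor_n^R(M,C)=0$, $n \ge d$, yields $\Tor_n^R(M,X)=0$; applying (ii) to the MCM module $X$ with rank then gives $\Tor_i^R(M,X)=0$ for all $i \ge n$, and transferring back along the same long exact sequence delivers the required vanishing of $\Tor_i^R(M,C)$ for all $i \ge n$ (the index $i=n=d$ is covered by the original hypothesis).

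For (i) $\Rightarrow$ (ii), let $C$ be maximal Cohen--Macaulay with rank $r$; since $R$ has positive dimension, $C$ is torsion-free (its associated primes are among the minimal primes of $R$, so no non-zerodivisor can annihilate any nonzero element), and choosing elements of $C$ that form a $Q(R)$-basis of $C \otimes_R Q(R)$ yields a short exact sequence $0 \to R^r \to C \to T \to 0$ with $T$ torsion. The long exact sequence in Tor produces isomorphisms $\Tor_j^R(M,C) \cong \Tor_j^R(M,T)$ for every $j \ge 2$. When $n \ge 2$ this immediately transfers $\Tor_n^R(M,C)=0$ to $\Tor_n^R(M,T)=0$, and (i) then yields $\Tor_i^R(M,T)=0$, hence $\Tor_i^R(M,C)=0$, for all $i \ge n$. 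The remaining edge case $d = n = 1$ is the main obstacle, since the torsion-cokernel sequence gives only an injection $\Tor_1^R(M,C) \hookrightarrow \Tor_1^R(M,T)$ rather than an isomorphism and so does not immediately transfer the vanishing. It is resolved by a single cosyzygy shift: because $C$ is totally reflexive over the Gorenstein ring $R$, the cosyzygy $\Omega_R^{-1}C$ is again MCM with rank, and the defining sequence $0 \to C \to F \to \Omega_R^{-1}C \to 0$ (with $F$ free) yields $\Tor_1^R(M,C) \cong \Tor_2^R(M,\Omega_R^{-1}C)$. This reduces the edge case to the already-handled $n = 2$ case applied to $\Omega_R^{-1}C$, and translating the resulting vanishing back gives $\Tor_i^R(M,C)=0$ for all $i \ge 1$.
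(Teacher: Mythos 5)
Your proof is correct, and the bulk of it tracks the paper's argument. The equivalence of (ii) and (iii) via a maximal Cohen--Macaulay approximation, together with the trivial implication (iii) $\Rightarrow$ (i), is exactly what the paper does --- in fact you were more careful than the paper in spelling out that the approximation module $X$ inherits rank (so that (ii) can be applied to it) and in isolating the boundary index $i = n = d$, where the long exact sequence supplies only an injection $\Tor_d^R(M,X) \hookrightarrow \Tor_d^R(M,C)$ rather than an isomorphism.

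The one genuine divergence is in (i) $\Rightarrow$ (ii). You embed a free module into $C$, via a short exact sequence $0 \to R^r \to C \to T \to 0$ with $T$ torsion; this yields isomorphisms $\Tor_j^R(M,C) \cong \Tor_j^R(M,T)$ only for $j \ge 2$, which forces you to treat $n = d = 1$ as a separate case and patch it with a cosyzygy shift. The paper instead embeds $C$ into a free module, using a short exact sequence $0 \to C \to G \to Y \to 0$ with $G$ free and $Y$ torsion (both constructions appear in \cite[1.3]{HW1}). That direction of the embedding gives $\Tor_{j+1}^R(M,Y) \cong \Tor_j^R(M,C)$ for all $j \ge 1$, so $\Tor_n^R(M,C) = 0$ transfers directly to $\Tor_{n+1}^R(M,Y) = 0$ with $n+1 > d$; applying (i) to the torsion module $Y$ and translating back handles every index $i \ge n$ uniformly, with no edge case and no cosyzygy step. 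Your patch is valid, but the paper's choice of which side of $C$ to put the free module on is exactly what makes the degree bookkeeping come out clean.
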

	
	\begin{proof} First we show that parts (ii) and (iii) are equivalent, that is, part (ii) implies part (iii). For that assume part (ii) holds. Let $C$ be an $R$-module with rank such that $\Tor_n^R(M,C)=0$ for some $n\geq d$. We want to show that $\Tor_i^R(M,C)=0$ for all $i\geq n$.
		
		We may assume $C$ is not maximal Cohen-Macaulay. Then we consider a Cohen-Macaulay approximation of $C$, that is, a short exact sequence of $R$-modules $0\to L \to X \to C \to 0$, where $\pd_R(L)<\infty$ and $X$ is maximal Cohen-Macaulay; see \ref{app}(ii). Note $\pd_R(L)<d$ so that $\Tor_n^R(M,X)=0$. Thus, as $X$ has rank, it follows from the hypothesis that $\Tor_i^R(M,X)=0$ for all $i\geq n$. This yields $\Tor_i^R(M,C)=0$ for all $i\geq n$, and hence establishes part (iii).
		
		Next we show that part (i) implies (ii). Assume part (i) holds and let $C$ be a maximal Cohen-Macaulay $R$-module with rank such that $\Tor_n^R(M,C)=0$ for some $n\geq d$. We want to show that $\Tor_i^R(M,C)=0$ for all $i\geq n$.
		As $C$ has rank, there exists a short exact sequence $0 \to C \to G \to Y\to 0$, where $G$ is free and $Y$ is torsion; see \cite[1.3]{HW1}. As $\Tor_n^R(M,C)=0$, we have that $\Tor_{n+1}^R(M,Y)=0$; now the hypothesis implies that $\Tor_i^R(M,Y)=0$ for all $i\geq n+1$. Consequently, $\Tor_i^R(M,C)=0$ for all $i\geq n$, as required.
		
		Finally we note a module is torsion if and only if it has rank zero. So part (iii) implies part (i). 
	\end{proof}
	
	\section*{Acknowledgements}
	The authors thank Kenta Sato for pointing to them a simpler proof of Lemma \ref{gd} than the one in a previous version of the manuscript.
	
	\bibliography{a}
	\bibliographystyle{amsplain}
\end{document}